\documentclass[a4paper,11pt]{article}

\usepackage{amsmath,amssymb,amsthm}

\usepackage{accents}

\usepackage{geometry}
\usepackage{bbm}
\usepackage{multirow}
\usepackage{pgfplots}
\usepackage{mathtools}
\usepackage{todonotes}
\usepackage{comment}
\usepackage[ruled,vlined,linesnumbered]{algorithm2e}

\allowdisplaybreaks[1]

\marginparwidth 0pt\marginparsep 0pt
\topskip 0pt\headsep 0pt\headheight 0pt
\oddsidemargin 0pt\evensidemargin 0pt
\textwidth 6.5in \topmargin 0pt\textheight 9.0in

\newcommand{\set}[2]{\left\{#1 \; \left|\; #2 \right.\right\}}
\newcommand{\indicator}{\mathbbm{1}}

\newcommand{\UG}{\Delta^\Gamma}

\newcommand{\Y}{\mathcal{Y}}

\newcommand{\I}{{\mathcal{I}}}
\newcommand{\bI}{\bar{I}}

\newcommand{\W}{\mathcal{W}}
\newcommand{\X}{\mathcal{X}}

\newcommand{\one}{\mathbf{1}}

\newcommand{\GammaI}{{\hat{\Gamma}}}
\newcommand{\setGammaI}{{\mathcal{G}_I}}

\newcommand{\Z}{\mathbb Z}

\newcommand{\Gammaone}{\GammaI}
\newcommand{\Gammaonestar}{\Gamma_{I^*}}
\newcommand{\Gammazero}{\bar{\Gamma}}
\newcommand{\Gammazerostar}{\Gammazero^*}
\newcommand{\pospart}[1]{\left[#1\right]^+}

\newcommand{\bx}{\pmb{x}}
\newcommand{\bbf}{\pmb{f}}
\newcommand{\bg}{\pmb{g}}
\newcommand{\bh}{\pmb{h}}

\newcommand{\bbdelta}{\pmb{\delta}}
\newcommand{\bgamma}{\pmb{\gamma}}

\DeclareMathOperator*{\argmax}{arg\,max}
\DeclareMathOperator*{\argmin}{arg\,min}
\DeclareMathOperator*{\conv}{conv}

\renewcommand{\mp}[1]{{\color{purple}#1}}

\newcommand{\MP}[1]{~\\[4pt]\todo[inline,color=green!40,noinlinepar,size=\small]{\textbf{MP:} #1}}

\newtheorem{observation}{Observation}
\newtheorem{corollary}{Corollary}
\newtheorem{proposition}{Proposition}
\newtheorem{theorem}{Theorem}

\newtheorem{lemma}{Lemma}
\newtheorem{assumption}{Assumption}

\usepackage{authblk}

\title{The robust selection problem with information discovery} 


\author[1]{Xiaoyu Chen\footnote{Corresponding author. Email: xiaoyu.chen@lirmm.fr}}
\author[2]{Marc Goerigk}
\author[1]{Michael Poss}

\affil[1]{LIRMM, University of Montpellier, CNRS, Montpellier, France}
\affil[2]{Business Decisions and Data Science, University of Passau, Germany}

\date{}

\begin{document}

\maketitle

\begin{abstract}
We explore a multiple-stage variant of the min-max robust selection problem with budgeted uncertainty that includes queries. First, one queries a subset of items and gets the exact values of their uncertain parameters. Given this information, one can then choose the set of items to be selected, still facing uncertainty on the unobserved parameters. In this paper, we study two specific variants of this problem. The first variant considers objective uncertainty and focuses on selecting a single item. The second variant considers constraint uncertainty instead, which means that some selected items may fail. We show that both problems are NP-hard in general. We also propose polynomial-time algorithms for special cases of the sets of items that can be queried. For the problem with constraint uncertainty, we also show how the objective function can be expressed as a linear program, leading to a mixed-integer linear programming reformulation for the general case. We illustrate the performance of this formulation using numerical experiments.\\

\noindent\textbf{Keywords:} selection, robust optimization, decision-dependent uncertainty, combinatorial optimization, NP-hardness
\end{abstract}


\section{Introduction}

Robust optimization is an increasingly popular approach to handle the uncertainty that arises in mixed-integer linear optimization problems. Leading to compact reformulations and requiring only to describe the uncertain parameters by their possible realizations, robust optimization is easy to apply and to manipulate. Among the different uncertainty sets that have been proposed in the literature, budgeted uncertainty is particularly convenient for combinatorial problems as it 
essentially preserves the complexity of the nominal problem when the number of robust constraints is constant~\cite{BertsimasS03,Alvarez-MirandaLT13}. The model considers that each uncertain cost (resp. weight) has a nominal cost $\bar{c}_j$ (resp. nominal weight $\bar a_{ij}$) and a possible deviation $\hat{c}_j$ (resp. $\hat a_{ij})$ and that in any possible scenario, at most $\Gamma\in\Z$ uncertain parameters will take their high costs $\bar{c}_j+\hat{c}_j$ (resp. high weight $\bar{a}_{ij}+\hat{a}_{ij}$), the other ones taking instead the low value $\bar{c}_j$ (resp. $\bar a_{ij}$). This uncertainty model is motivated by the observation that it is \emph{unlikely} that all components of the cost or weight vector simultaneously take their worst-case, which can be formalized with the probabilistic bounds discussed in~\cite{BertsimasS04,Poss13}. Defining $[n]=\{1,\ldots,n\}$ and
$$
\UG=\set{\pmb{\delta} \in \{0,1\}^n}{\sum_{i\in [n]}\delta_i \leq \Gamma},
$$
a general robust combinatorial optimization problem with $n$ variables and $m$ robust constraints can be defined as
\begin{equation}
\label{eq:rcoGamma}\tag{RCO}
 \min_{\bx\in \X} \set{\max_{\pmb{\delta} \in \UG} \sum_{j\in[n]}(\bar{c}_j+\delta_j\hat{c}_j)x_j}{\sum_{j\in[n]}(\bar{a}_{ij}+\delta_j\hat{a}_{ij})x_j\leq b_i,\;\forall i\in[m],\forall \bbdelta\in\Delta^\Gamma},
\end{equation}
where $\X\subseteq\{0,1\}^n$ is the feasibility set of the deterministic problem. In addition to its theoretical tractability, and its probabilistic bounds, strong formulations~\cite{BusingGK23IPCO} and ad-hoc branch-and-bound algorithms~\cite{BusingGK23} have recently been proposed, leading to efficient algorithms for solving problem~\eqref{eq:rcoGamma}.

This optimistic outlook has made possible to consider more complex robust problems than before. Among these richer models, some now consider the \emph{endogenous nature} of the uncertain parameters, meaning that the value taken by the uncertain parameters depend on the decision that has been taken. Decision-dependent uncertainty arises in a wide variety of settings, in particular when some investment can be made to reduce the size of the uncertainty set~\cite{ArslanP24,nohadani2018optimization}. Herein we are interested in the problem where some binary measurement (or queries) model the investments that can be made on the uncertain parameters, leading to the following problem with four levels of decisions: (i) the set of queried parameters $I$ is selected, (ii) the adversary reveals the values of the queried parameters with vector $\bgamma$, (iii) the actual solution $\bx\in \X$
is selected, and (iv) the adversary chooses the value for the remaining uncertain parameters in $\UG(I,\pmb{\gamma})=\set{\pmb{\delta}\in\UG}{\delta_i=\gamma_i,\,\forall i \in I}$, which enforces the coherence between the uncertain vector $\bbdelta\in\UG(I,\pmb{\gamma})$ and the initial observations $\bgamma\in\UG$. The complete decision-dependent information discovery (DDID) problem can thus be stated as follows:
\begin{equation}
\label{eq:DDID}\tag{DDID}
    \min_{I\in \I}\max_{\pmb{\gamma} \in \UG} \min_{\bx\in \X} \set{\max_{\pmb{\delta} \in \UG(I,\pmb{\gamma})} \sum_{j\in[n]}(\bar{c}_j+\delta_j\hat{c}_j)x_j}{\sum_{j\in[n]}(\bar{a}_{ij}+\delta_j\hat{a}_{ij})x_j\leq b_i,\;\forall i\in[m],\forall \bbdelta\in\UG(I,\pmb{\gamma})},
\end{equation}
where $\I\subseteq 2^{[n]}$ contains the sets of items that can be queried simultaneously. Observe that the values of $\bgamma$ chosen for $j\in[n]\setminus I$ in the outermost maximization are irrelevant since these can be freely modified when choosing the vectors $\bbdelta\in\UG(I,\pmb{\gamma})$.

Model~\eqref{eq:DDID} has many applications in urban planning, project management, resource allocation, and scheduling, among many others. Early papers involving DDID where, for instance, considering applications production planning~\cite{jonsbraaten1998class}. The more recent paper~\cite{vayanos2021robust} details applications in a R\&D project portfolio optimization problem, where a company must choose how to prioritize the projects in its pipeline~\cite{solak2010optimization,colvin2008stochastic}.
That paper also describes a preference elicitation with real-valued recommendations where one can investigate how much users like any particular item. 
The paper further applies the latter model to improve the US kidney allocation system.
Even more recently, Paradiso et al.~\cite{paradisoexact} consider combining a routing problem with sensor location, which is motivated by the problem of collecting medicine crates at the Alrijne hospital.

Despite the many applications of~\eqref{eq:DDID}, solution methodology for the problem has stayed behind. To the best of our knowledge, the only general approach to~\eqref{eq:DDID} is that of~\cite{vayanos2021robust}, who proposed a heuristic reformulation based on the application of the $K$-adaptability paradigm~\cite{hanasusanto2015k}. Unfortunately, the resulting MILP reformulations are large and heavily rely on big-$M$ coefficients, therefore limiting their use to rather small instances. The problem appears to be easier to solve when there are no robust constraints ($m=0$), and~\cite{paradisoexact} and~\cite{omer2023combinatorial} proposed exact algorithms in this case. On the one hand,~\cite{paradisoexact} proposed an exact combinatorial Benders decomposition algorithm which can reasonably well solve~\eqref{eq:DDID}, making no particular assumption about $\X$. On the other hand,~\cite{omer2023combinatorial} leverages the explicit description of $\conv(\X)$ to propose strong MILP reformulations, possibly coupled with a branch-and-price algorithm in the case the description of $\conv(\X)$ is obtained through Dantzig-Wolfe decomposition.

Furthermore, very little is known about the theoretical complexity even for
very simple cases of~\eqref{eq:DDID}, which is the gap we intend to start filling in this work. Specifically, we consider two simple cases of~\eqref{eq:DDID} based on the selection problem, which is characterized by the feasibility set $\X^{SEL}=\set{\bx\in\{0,1\}^n}{\sum_{j\in[n]}x_i=p}$. Due to their simple structure, selection problems are frequently studied in robust combinatorial optimization, see, e.g., \cite{conde2004improved,kasperski2017robust,chassein2018recoverable,lachmann2021linear}.
We show under which conditions the resulting special cases~\eqref{eq:DDID} are polynomially solvable or NP-hard. First, let us recall that solving the (deterministic) selection problem for cost vector $\pmb{c}$ is equivalent to  optimizing $\pmb{c}^\top \bx$ over $\X^{SEL}$, which can be done in $O(n)$ thanks to the following result, see also~\cite{robook} for an overview.
\begin{theorem}[\cite{blum1973time}]\label{thm:On}
The $i$-th smallest of $n$ numbers can be computed in $O(n)$.
\end{theorem}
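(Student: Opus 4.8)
The plan is to prove the stronger statement that the $i$-th smallest element (the $i$-th order statistic) of an unordered list can be found in worst-case linear time, via a divide-and-conquer selection algorithm with a carefully chosen pivot. The core routine, call it $\mathrm{SELECT}(A,i)$, picks a pivot element $x$, partitions $A$ into the elements smaller than $x$, equal to $x$, and larger than $x$ (which takes $O(|A|)$ comparisons), and then either returns $x$ or recurses into exactly one of the two outer parts after adjusting the rank $i$ accordingly. The entire difficulty lies in guaranteeing that the part we recurse into is a constant fraction smaller than $A$: a poorly chosen pivot (e.g.\ always the first element) can force the subproblem to shrink by only one element per call, yielding $\Theta(n^2)$ total work.

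To control the pivot I would use the \emph{median-of-medians} construction. First I would partition the $n$ elements into $\lceil n/5 \rceil$ groups of at most five elements each, and compute the median of every group directly in constant time per group, hence $O(n)$ in total. I would then call $\mathrm{SELECT}$ recursively on the collection of these $\lceil n/5 \rceil$ group-medians to obtain their median $x$, and use $x$ as the pivot. This self-referential use of the procedure on an input of size about $n/5$ is precisely what makes the worst-case bound attainable.

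The key step is a counting lemma bounding the rank of $x$. Since $x$ is the median of the group-medians, at least half of the groups have their median $\le x$, and in each such full group at least three of its five elements are $\le x$; a symmetric argument holds on the other side. This shows that $x$ dominates at least roughly $3n/10$ of the elements and is dominated by at least roughly $3n/10$ of them, so whichever outer part we recurse into has size at most about $7n/10 + O(1)$. The main obstacle is making this bound fully rigorous: one must account for the possibly incomplete final group and the floor/ceiling terms carefully so that the recursive subproblem is provably of size at most $7n/10$ plus a constant.

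Combining the two recursive calls — one of size $\lceil n/5 \rceil$ to find the pivot and one of size at most $7n/10 + O(1)$ to finish the selection — with the $O(n)$ work spent on grouping and partitioning yields the recurrence $T(n) \le T(\lceil n/5 \rceil) + T(7n/10 + O(1)) + O(n)$. Because $\tfrac15 + \tfrac{7}{10} = \tfrac{9}{10} < 1$, the total subproblem size contracts geometrically, and a straightforward induction establishes $T(n) \le c\,n$ for a suitable constant $c$, giving the claimed $O(n)$ worst-case running time.
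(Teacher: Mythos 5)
Your proposal is correct: it is precisely the median-of-medians selection algorithm of Blum, Floyd, Pratt, Rivest, and Tarjan, which is exactly the result the paper invokes here --- the paper itself gives no proof and simply cites \cite{blum1973time}, whose argument (groups of five, recursive pivot selection, the $3n/10$ counting lemma, and the recurrence $T(n) \le T(\lceil n/5 \rceil) + T(7n/10 + O(1)) + O(n)$ with $\tfrac15 + \tfrac{7}{10} < 1$) is the one you sketched. No gaps worth noting beyond the floor/ceiling bookkeeping you already flagged.
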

\noindent By determining the $p$-th smallest item with respect to $\pmb{c}$, we can then construct an optimal solution to the selection problem in linear time.

The first of the two problems we study in this paper considers more specifically the 1-selection problem under \emph{Objective Uncertainty} (thus assuming $m=0$):
\begin{equation}
\label{eq:OU}
    \min_{I\in \I}\ \max_{\pmb{\gamma} \in \UG}\ \min_{j\in[n]}\ \max_{\pmb{\delta} \in \UG(I,\pmb{\gamma})}\ \bar{c}_j+\delta_j\hat{c}_j,\tag{OU}
\end{equation}
where the inner minimization over $j$ has replaced the minimum over $\pmb{x}\in \X$ because a unique item is being chosen (i.e., $p=1$). We mention that problem~\eqref{eq:OU} has been first introduced in~\cite{vayanos2021robust} under the name of \emph{preference elicitation}. This problem refers to developing a decision support system capable of generating recommendations to a user, by letting that person ask a limited number of questions from a potentially large set before making a recommendation~\cite{vayanos2020robust}. In what follows, we denote the objective function of the first stage of the problem as
\begin{equation*}
\Psi(I)=\max_{\pmb{\gamma} \in \UG}\ \min_{j\in[n]}\ \max_{\pmb{\delta} \in \UG(I,\pmb{\gamma})}\ \bar{c}_j+\delta_j\hat{c}_j.
\end{equation*}

The second problem we study considers a variant of the selection problem where some selected items can fail, and it is uncertain which ones will fail. We may query items beforehand to determine whether they will fail. We want to ensure that amongst the $p$ selected items, at most $b$ will fail. Protecting against failing items is an often-studied variant of robust combinatorial optimization, see, e.g., \cite{adjiashvili2015bulk,goerigk2024robust}.
We obtain the \emph{Constraint Uncertainty} optimization problem from~\eqref{eq:DDID} by considering a single robust constraint with $\bar a_j=0$ and $\hat a_j=1$ and a known objective:
\begin{equation}
\label{eq:CU}
    \min_{I\in \I}\ \max_{\pmb{\gamma} \in \UG}\ \min_{\pmb{x}\in \X^{SEL}} \set{\sum_{i\in[n]} c_ix_i}{\sum_{i\in[n]} \delta_i x_i \leq b,\ \forall \pmb{\delta}\in \UG(I,\pmb{\gamma})}. \tag{CU}
\end{equation}
Observe that we allow an item to be packed, even if we know that it will fail.
In what follows, we denote the objective function of this problem as
\begin{equation*}
\Phi(I)=\max_{\pmb{\gamma} \in \UG}\ \min_{\pmb{x}\in \X^{SEL}} \set{\sum_{i\in[n]} c_ix_i}{\sum_{i\in[n]} \delta_i x_i \leq b,\ \forall \pmb{\delta}\in \UG(I,\pmb{\gamma})}.
\end{equation*}
While we use the terminology that items ``fail'' for simplicity, the meaning of the uncertain constraint will depend on the application that is modeled. For example, if we select a team of $p$ employees, the uncertain constraint means that at most $b$ should have some specific property; if we select a basket of $p$ options within a recommender system, at most $b$ should be of a specific category; or if we select $p$ assets for a project portfolio, at most $b$ should invoke a specific resource. More generally, problems of type~\eqref{eq:CU} model cardinality-constrained problems (see \cite{stephan2010cardinality}), where it is uncertain if an item contributes to the cardinality constraint.

\paragraph{Contributions and structure of the paper} We first consider in Section~\ref{sec:OU} the case of $1$-selection problem under objective uncertainty~\eqref{eq:OU}. We provide analytical solutions for $\Phi(I)$, and prove the problem to be NP-hard when $\I$ is arbitrary. We then focus on the set that allows to query any set of $q$ items
\begin{equation*}
\I^{SEL}=\set{I \subseteq[n]}{|I| \leq q},
\end{equation*}
and show that optimizing $\Phi(I)$ over this set can be done in $O(n)$ by querying the $q$ smallest items in terms of $\bar{\pmb{c}}$. We then extend our result to a generalization of that set, the knapsack set defined by a given vector of weights $\pmb{a}$ and a capacity $C$,
\begin{equation*}
\I^{KP}=\set{I \subseteq[n]}{\sum_{i\in I} a_i  \leq C}.
\end{equation*}
We obtain that, somewhat surprisingly, optimizing $\Phi(I)$ over $\I^{KP}$ can be done in $O(n\log n)$ by using a slightly more complicated algorithm. We then turn to the $p$-selection problem with constraint uncertainty~\eqref{eq:CU} in Section~\ref{sec:CU} and provide conditions on the problem's input to ensure its feasibility. We then show that the problem is NP-hard already for $\I^{KP}$. Then, we provide a linear programming reformulation for $\Phi(I)$. The formulation can be further leveraged to obtain a mixed-integer linear programming reformulation for the full problem and any set $\I$ for which we have a compact formulation. The construction of the linear programming reformulations combines the linear programming duality classically used in robust optimization with some new ad-hoc ingredients specific to~\eqref{eq:CU}. Last, we focus on the case of $\I^{SEL}$ and show which items should always be selected in an optimal $I$, leading again to an algorithm in $O(n)$ thanks to Theorem~\ref{thm:On}. 
The MILP reformulation is illustrated with numerical experiments in Section~\ref{sec:num}, and the paper is concluded in Section~\ref{sec:conc}.

\paragraph{Additional references} We mention that other models with queries have been considered in the literature, for instance under the name of explorable uncertainty, which is usually traced back to~\cite{kahan1991model}. In that work, the author studies the problem of how many queries are necessary to find the optimal solution to the selection problem with cost uncertainty. Subsequent papers have later studied the minimal number of queries to guarantee optimality for other optimization problems, such as the shortest path problem~\cite{feder2007computing}, the knapsack problem~\cite{goerigk2015robust}, and the spanning tree problem~\cite{erlebach2008computing,megow2017randomization}.

\paragraph{Further notation}
For any integer $n$, we denote by $[n]_{0}$ the set $\{0, 1,2,\dots,n\}$. For any real number $a$, we denote $\max \{0, a\}$ by $[a]^{+}$. Set a set $I\subseteq[n]$, we denote by $\bI=[n]\setminus I$ all items not in $I$. Throughout, we also use the notation $\Gammaone=\sum_{i \in I} \gamma_i$ and $\Gammazero=\Gamma-\Gammaone$ to distinguish the budgets spent on the observed and unobserved items, respectively. Furthermore, we denote by $\setGammaI=[\min(|I|,\Gamma)]_0$ the set of values that $\Gammaone$ can take. We also introduce the set
$
\Delta^{\Gamma}_= = \set{\pmb{\delta} \in \{0,1\}^n}{\sum_{i\in [n]}\delta_i = \Gamma}
$
 to represent the case where the budget constraint must be satisfied at equality. Last, as we focus on the selection feasibility set in what remains, we denote $\X^{SEL}$ shortly as $\X$.

\section{Objective uncertainty}
\label{sec:OU}

In this section, we discuss problem~\eqref{eq:OU}. We provide elementary observations in Section~\ref{subsec:Objective Additional definitions and a basic property}. Given a query set $I$, we derive a closed-form expression of the optimal objective value in Section~\ref{subsec: Expression of Psi(I)}. We prove in Section~\ref{subsec: NP-hardness of the general case} that the whole problem is NP-hard in general. In Section~\ref{subsec: Polynomial-time algorithms for special cases}, we show that the special cases that use query sets $\I^{SEL}$ or $\I^{KP}$ can be solved in polynomial time.

\subsection{Additional definitions and basic properties}
\label{subsec:Objective Additional definitions and a basic property}

To simplify expressions, for any $I\in \I$ and $\Gammaone\in \setGammaI$, we define the cost of query set $I$ using exactly $\Gammaone$ deviations on $I$ and $\Gammazero=\Gamma-\Gammaone$ deviations on $\bI$ as
\begin{equation*}
\psi(I,\Gammaone) = \max_{\pmb{\gamma} \in \Delta^{\Gammaone}_=}\ \min_{j\in [n]}\ \max_{\pmb{\delta}\in \Delta^{\Gamma-\Gammaone}_=(I,\pmb{\gamma})} \ \bar{c}_j+\delta_j\hat{c}_j.
\end{equation*}
Observe that $\Psi(I) = \max_{\Gammaone\in\setGammaI} \psi(I,\Gammaone)$.
Throughout this section, we assume the following to ease the analysis of the algorithms.
\begin{assumption}
\label{assump: item sorted by bar c}
    We assume that the items are sorted such that $\bar{\pmb{c}}$ is non-decreasing, and this order is maintained within the subsets $I$ and $\bI$.
\end{assumption}
Observe that to ensure Assumption~\ref{assump: item sorted by bar c}, a sorting step in $O(n \log n)$ is necessary. However, for algorithms that run in $O(n)$, we can justify that the assumption is for ease of presentation only.
We provide next two simple observations that will be used throughout.
\begin{observation}
\label{obs: lower_upper bound}
    The values $\min_{j\in[n]} \bar{c}_j$ and  $\min_{j\in[n]} \bar{c}_j+\hat{c}_j$ represent a lower bound and an upper bound of problem~\eqref{eq:OU}, respectively.
\end{observation}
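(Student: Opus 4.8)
The plan is to establish both inequalities pointwise at the innermost level and then propagate them outward through the four nested optimization operators. The starting point is the elementary bound
\begin{equation*}
\bar{c}_j \;\leq\; \bar{c}_j + \delta_j \hat{c}_j \;\leq\; \bar{c}_j + \hat{c}_j,
\end{equation*}
valid for every $j\in[n]$ and every $\pmb{\delta}\in\UG(I,\pmb{\gamma})$, which holds because $\delta_j\in\{0,1\}$ and the deviations $\hat{c}_j$ are nonnegative.

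For the lower bound, I would fix an arbitrary $I\in\I$ and $\pmb{\gamma}\in\UG$ and note that the left inequality above yields $\bar{c}_j+\delta_j\hat{c}_j\geq \bar{c}_j \geq \min_{k\in[n]}\bar{c}_k$ for every $j$ and every feasible $\pmb{\delta}$. Since the right-hand side is a constant independent of $\pmb{\delta}$, $j$, $\pmb{\gamma}$ and $I$, it passes unchanged through the innermost maximization over $\pmb{\delta}$, then through the minimization over $j$ (the minimum of quantities each at least $\min_k \bar{c}_k$ is itself at least $\min_k \bar{c}_k$), and finally through the outer $\max_{\pmb{\gamma}}$ and $\min_I$. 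This shows that the optimal value of~\eqref{eq:OU} is at least $\min_{j\in[n]}\bar{c}_j$.

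For the upper bound, I would again fix $I$ and $\pmb{\gamma}$ and use the right inequality to bound each innermost maximization by $\max_{\pmb{\delta}\in\UG(I,\pmb{\gamma})}(\bar{c}_j+\delta_j\hat{c}_j)\leq \bar{c}_j+\hat{c}_j$. Taking the minimum over $j$ and using monotonicity of the minimum then gives $\min_{j}\max_{\pmb{\delta}}(\bar{c}_j+\delta_j\hat{c}_j)\leq \min_{j\in[n]}(\bar{c}_j+\hat{c}_j)$ for each $\pmb{\gamma}$ and $I$. As the right-hand side is again a constant, it survives the subsequent $\max_{\pmb{\gamma}}$ and $\min_I$, yielding that the optimal value is at most $\min_{j\in[n]}(\bar{c}_j+\hat{c}_j)$.

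The argument involves no real obstacle; the only point requiring care is bookkeeping the direction of each of the four operators, so that the bounding constant is introduced at the right stage---below a $\max_{\pmb{\delta}}$ for the lower bound, and after collapsing $\max_{\pmb{\delta}}$ and $\min_j$ for the upper bound---before it is carried through the remaining operators unchanged.
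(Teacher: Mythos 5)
Your proof is correct, but it follows a different route than the paper. The paper introduces two auxiliary problems: it lets $\underline{z}$ and $\bar{z}$ be the optimal values of~\eqref{eq:OU} with the budget replaced by $\Gamma=0$ and $\Gamma\geq n$, respectively, computes $\underline{z}=\min_{j\in[n]}\bar{c}_j$ and $\bar{z}=\min_{j\in[n]}\bar{c}_j+\hat{c}_j$, and then asserts $\underline{z}\leq z^*\leq \bar{z}$ --- an appeal to the (left implicit) fact that the optimal value is non-decreasing in the adversary's budget, since enlarging $\Gamma$ enlarges both $\UG$ and every $\UG(I,\pmb{\gamma})$. You instead sandwich the innermost objective pointwise, $\bar{c}_j\leq \bar{c}_j+\delta_j\hat{c}_j\leq \bar{c}_j+\hat{c}_j$, and carry the two constants through the four nested operators one at a time. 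Your version is more self-contained: it needs no auxiliary problems and no monotonicity-in-$\Gamma$ claim, only the monotonicity of $\min$ and $\max$ under pointwise bounds. What the paper's version buys is an interpretation --- the two bounds are exactly the optimal values under no uncertainty and under full uncertainty --- which is conceptually useful later when reasoning about how the budget interacts with queries. Both arguments rely on the standing assumption of budgeted uncertainty that the deviations $\hat{c}_j$ are nonnegative.
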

\begin{proof}
    We denote $z^*$ as the optimal objective value of problem~\eqref{eq:OU}. Let $\underline{z}$ denote the optimal objective value of problem~\eqref{eq:OU} where $\Gamma=0$. Similarly, $\bar{z}$ denotes the optimal objective value of problem~\eqref{eq:OU} where $\Gamma \geq n$. We have $\underline{z}=\min_{j\in[n]} \bar{c}_j$ and $\bar{z}=\min_{j\in[n]} \bar{c}_j+\hat{c}_j$. Furthermore, $\underline{z} \leq z^* \leq \bar{z}$.
\end{proof}

\begin{observation}
\label{obs: psi(I,Gammaone)}
    For any $I \in \I$ and any $\Gammaone\in \setGammaI$, the adversary attacks the first $\Gammaone$ items in $I$ to maximize $\psi(I,\Gammaone)$.
\end{observation}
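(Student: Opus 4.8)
The plan is to reduce the adversary's choice of $\pmb\gamma$ to a one-dimensional combinatorial problem supported on $I$ and then settle it with an exchange argument. First I would fix $I$ and $\Gammaone$ and analyze the innermost maximization over $\pmb\delta$ for each candidate item $j$ separately. For $j\in I$, the constraint $\delta_j=\gamma_j$ forces the value $\bar c_j+\delta_j\hat c_j=\bar c_j+\gamma_j\hat c_j$, independently of how $\pmb\delta$ is completed on $\bI$. For $j\in\bI$, the adversary is free to set $\delta_j=1$ whenever $\Gammazero=\Gamma-\Gammaone\geq 1$ (there being enough budget and items on $\bI$), so the value equals $\bar c_j+\hat c_j$, and $\bar c_j$ if $\Gammazero=0$; crucially this quantity does not depend on $\pmb\gamma$. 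Writing $V_{\bI}=\min_{j\in\bI}\bigl(\bar c_j+\indicator_{\Gammazero\geq 1}\hat c_j\bigr)$ for this constant contribution, the objective for a fixed $\pmb\gamma$ is
\[
\min\Bigl(\min_{j\in I}\bigl(\bar c_j+\gamma_j\hat c_j\bigr),\,V_{\bI}\Bigr),
\]
and since $t\mapsto\min(t,V_{\bI})$ is non-decreasing, maximizing this over $\pmb\gamma\in\Delta^{\Gammaone}_=$ is achieved by any $\pmb\gamma$ that maximizes $\min_{j\in I}(\bar c_j+\gamma_j\hat c_j)$.

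It then remains to show that attacking the first $\Gammaone$ items of $I$ (those with smallest $\bar c$, by Assumption~\ref{assump: item sorted by bar c}) maximizes $\min_{j\in I}(\bar c_j+\gamma_j\hat c_j)$. I would prove this by a swap argument. Let $A=\{j\in I:\gamma_j=1\}$ with $|A|=\Gammaone$ be any attack that is not the prefix of $I$; then there exist an unattacked item $l$ and an attacked item $k$ with $l$ preceding $k$, so that $\bar c_l\leq\bar c_k$. Let $m$ denote the current value of $\min_{j\in I}(\bar c_j+\gamma_j\hat c_j)$; since $l$ is unattacked it contributes $\bar c_l$, whence $m\leq\bar c_l$. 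Swapping the attack from $k$ to $l$ raises item $l$'s value to $\bar c_l+\hat c_l\geq\bar c_l\geq m$ and lowers item $k$'s value to $\bar c_k\geq\bar c_l\geq m$, while leaving all other items untouched; therefore every item still has value at least $m$ and the minimum cannot decrease. Each such swap strictly decreases the number of inversions between attacked and unattacked items, so iterating transforms $A$ into the prefix of the first $\Gammaone$ items without ever decreasing the objective, which establishes the claim.

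The bulk of the argument is routine once the right reduction is in place; the one step that requires care is the inequality $m\leq\bar c_l$, which is exactly what makes the exchange monotone, as it guarantees that both the item losing its deviation and the item gaining it remain above the incumbent minimum. I would also make explicit the feasibility caveat that $|\bI|\geq\Gammazero$ and $|I|\geq\Gammaone$, ensuring $\Delta^{\Gammaone}_=$ and $\Delta^{\Gamma-\Gammaone}_=(I,\pmb\gamma)$ are non-empty so that the case split on $\Gammazero\geq 1$ in the $\bI$-part is well defined.
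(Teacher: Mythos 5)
Your proof is correct, and its overall skeleton matches the paper's: analyze the innermost maximization separately for each candidate $j$, note that for $j\in I$ the value $\bar c_j+\gamma_j\hat c_j$ is forced while for $j\in\bI$ the value is $\bar c_j+\hat c_j$ if $\Gammazero\geq 1$ and $\bar c_j$ otherwise, independently of $\pmb{\gamma}$. Where you diverge is in how prefix-optimality of the attack is established. The paper exploits the inequality $\bar c_j\leq \bar c_j+\hat c_j$ to \emph{absorb} the attacked items' deviated values into the $\gamma$-independent constant $\min_{j\in[n]}(\bar c_j+\hat c_j)$, so that $\psi(I,\Gammaone)$ collapses to a minimum of constants and the single $\gamma$-dependent term $\min_{j\in I:\,\gamma_j=0}\bar c_j$; maximizing that term by attacking the smallest-$\bar c$ items is then immediate, and the resulting closed-form expressions are exactly what get reused in Lemmas~\ref{lemma: |I| = Gamma} and~\ref{lemma: |I| > Gamma}. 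You instead keep the full objective $\min_{j\in I}(\bar c_j+\gamma_j\hat c_j)$ and run a swap argument: the key inequality $m\leq\bar c_l$ for the unattacked item $l$ guarantees both swapped items stay above the incumbent minimum, and your inversion-counting argument for termination is sound (each swap in fact reduces the inversion count by $k-l$). Your route is more elementary and self-contained --- it never needs the absorption trick --- but it does not produce the explicit formulas for $\psi(I,\Gammaone)$ that the paper's later lemmas build on; the paper's route is shorter here precisely because it is engineered to deliver those closed forms. Your explicit feasibility caveat ($|\bI|\geq\Gammazero$ so that $\Delta^{\Gamma-\Gammaone}_=(I,\pmb{\gamma})$ is non-empty) is a point the paper glosses over, and is worth keeping.
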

\begin{proof}
    If $\Gammaone = \Gamma$, the cheapest item for the second stage is the one that takes the lower value between $\min_{j \in I : \gamma_j=1} \bar{c}_j+\hat{c}_j$ and $\min_{j\in\set{i\in I}{\gamma_i=0}\cup \bI} \bar{c}_j$. Observe that as $\bar c_j\leq \bar c_j +\hat c_j$ for each $j\in[n]$, the first term can be replaced by $\min_{j\in[n]} \bar{c}_j+\hat{c}_j$, leading to $\psi(I,\Gammaone)= \min \{\min_{j\in[n]} \bar{c}_j+\hat{c}_j, \min_{j\in \bI}\bar{c}_j, \min_{j\in I: \gamma_j=0} \bar{c}_j \}$.
    Therefore, to maximize $\psi(I,\Gammaone)$, the adversary attacks the items with smallest $\bar{c}_j$ in $I$. By Assumption~\ref{assump: item sorted by bar c}, the adversary attacks the first $\Gammaone$ items in $I$.
    If $\Gammaone < \Gamma$, the adversary has enough budget to attack any item $i \in \bI$ with $x_i=1$. Thus, the cheapest item is the one that takes the minimum between $\min_{j\in\set{i\in I}{\gamma_i=1}\cup \bI} \bar{c}_j+\hat{c}_j$ and $\min_{j \in I: \gamma_j=0}\bar{c}_j $.
    Similarly, by replacing the first term as before, we have $\psi(I,\Gammaone)= \min \{\min_{j\in[n]} \bar{c}_j+\hat{c}_j,  \min_{j \in I: \gamma_j=0}\bar{c}_j \}$. 
    Therefore, to maximize $\psi(I,\Gammaone)$, the adversary again targets the items with smallest $\bar{c}$ in $I$, specifically the first $\Gammaone$ items in $I$.
\end{proof}

\subsection{Expression of $\Psi(I)$}
\label{subsec: Expression of Psi(I)}
In this subsection, we provide closed-form expressions of the value of $\Psi(I)$ for a query set $I\in \I$ where $|I|<\Gamma$,  $|I|=\Gamma$ and $|I|>\Gamma$, respectively. Consequently, given a query set $I$, $\Psi(I)$ can be computed in polynomial time.

\begin{lemma}
\label{obs: |I| < Gamma}
     Let $I\in \I$ such that $|I| < \Gamma$, then $\Psi(I)=\min_{j \in [n]} \bar{c}_j+\hat{c}_j$.
\end{lemma}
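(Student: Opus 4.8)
The plan is to sandwich $\Psi(I)$ between the upper bound $\min_{j\in[n]}\bar c_j+\hat c_j$ (which holds for every query set) and a matching lower bound obtained by letting the adversary spend its observation budget entirely inside $I$. The whole argument reduces to the relation $\Psi(I)=\max_{\Gammaone\in\setGammaI}\psi(I,\Gammaone)$ together with the closed-form expressions of Observation~\ref{obs: psi(I,Gammaone)}, so no fresh min-max manipulation is needed.

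First I would establish the upper bound. For any $\Gammaone\in\setGammaI$, both branches of the expression derived in Observation~\ref{obs: psi(I,Gammaone)} are minima whose first argument is $\min_{j\in[n]}\bar c_j+\hat c_j$; hence $\psi(I,\Gammaone)\le\min_{j\in[n]}\bar c_j+\hat c_j$ for every admissible $\Gammaone$. Taking the maximum over $\Gammaone\in\setGammaI$ and using $\Psi(I)=\max_{\Gammaone\in\setGammaI}\psi(I,\Gammaone)$ gives $\Psi(I)\le\min_{j\in[n]}\bar c_j+\hat c_j$. This is simply the upper bound of Observation~\ref{obs: lower_upper bound} specialized to a fixed $I$.

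Next, for the matching lower bound I would exhibit a single value of $\Gammaone$ that attains it. Because $|I|<\Gamma$ we have $\min(|I|,\Gamma)=|I|$, so $\setGammaI=[|I|]_0$ and in particular $\Gammaone=|I|$ is admissible. Choosing $\Gammaone=|I|$ means the adversary queries and attacks \emph{all} items of $I$, i.e. $\gamma_j=1$ for every $j\in I$, while still retaining a strictly positive leftover budget $\Gammazero=\Gamma-|I|\ge 1$ to deviate on $\bI$ in the innermost maximization. Since $\Gammaone=|I|<\Gamma$, Observation~\ref{obs: psi(I,Gammaone)} applies in its case $\Gammaone<\Gamma$ and yields $\psi(I,|I|)=\min\{\min_{j\in[n]}\bar c_j+\hat c_j,\ \min_{j\in I:\gamma_j=0}\bar c_j\}$. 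As every item of $I$ is attacked, the index set $\{j\in I:\gamma_j=0\}$ is empty and the second term is $+\infty$ by the usual convention, so $\psi(I,|I|)=\min_{j\in[n]}\bar c_j+\hat c_j$.

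Combining the two bounds gives $\min_{j\in[n]}\bar c_j+\hat c_j=\psi(I,|I|)\le\Psi(I)\le\min_{j\in[n]}\bar c_j+\hat c_j$, forcing all inequalities to be equalities and hence $\Psi(I)=\min_{j\in[n]}\bar c_j+\hat c_j$, as claimed. The only delicate points are verifying that $\Gammaone=|I|$ is admissible (this is exactly where the strict inequality $|I|<\Gamma$ enters) and reading off the correct branch of Observation~\ref{obs: psi(I,Gammaone)}; the underlying intuition is that the leftover budget $\Gamma-|I|\ge 1$ lets the adversary deviate on whichever single item the second stage would pick among $\bI$, so no choice of item can avoid paying its full cost $\bar c_j+\hat c_j$.
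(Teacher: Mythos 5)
Your proof is correct and takes essentially the same approach as the paper's: the core idea in both is that the adversary attacks all of $I$ (your witness $\Gammaone=|I|$, admissible precisely because $|I|<\Gamma$) and keeps a leftover budget $\Gamma-|I|\geq 1$ to hit whichever item the second stage picks in $\bI$, so no item can be obtained below its full cost $\bar{c}_j+\hat{c}_j$. The paper states this informally in a few lines with the upper bound left implicit, whereas you formalize it as a sandwich via the decomposition $\Psi(I)=\max_{\Gammaone\in\setGammaI}\psi(I,\Gammaone)$ and the expressions of Observation~\ref{obs: psi(I,Gammaone)}; this is added rigor, not a different route.
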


\begin{proof}
     If $|I| < \Gamma$, then $\Gammazero \ge 1$.
     This implies that in the worst case,  the adversary attacks all items in $I$ and also targets any item in $\bI$ that could potentially be selected in the second stage. Specifically, $\delta_j=1$ whenever $x_j=1$. In other words, we do not have the opportunity to select an item whose $\delta$ is definitely equal to 0. Hence, the optimal value is $\min_{j \in [n]} \bar{c}_j + \hat{c}_j$.
\end{proof}

\begin{lemma}
\label{lemma: |I| = Gamma}
    Let $I = \{j_1, j_2, \dots, j_{\Gamma}\}$, then 
    \begin{equation*}
    \Psi(I)=\min \left\{ \min_{j\in[n]} \bar{c}_j+\hat{c}_j, \max \{ \min_{j \in \bI} \bar{c}_j, \bar{c}_{j_{\Gamma}} \}   \right\}.
    \end{equation*}
\end{lemma}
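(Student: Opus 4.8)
The plan is to exploit the decomposition $\Psi(I)=\max_{\Gammaone\in\setGammaI}\psi(I,\Gammaone)$ recorded just above, which for $|I|=\Gamma$ ranges over $\Gammaone\in\{0,\dots,\Gamma\}$, and to evaluate each $\psi(I,\Gammaone)$ using the explicit formulas from Observation~\ref{obs: psi(I,Gammaone)}. The two regimes $\Gammaone<\Gamma$ and $\Gammaone=\Gamma$ behave differently, so I would treat them separately and then take the maximum over $\Gammaone$.

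For $\Gammaone<\Gamma$, Observation~\ref{obs: psi(I,Gammaone)} gives $\psi(I,\Gammaone)=\min\{\min_{j\in[n]}\bar{c}_j+\hat{c}_j,\ \min_{j\in I:\gamma_j=0}\bar{c}_j\}$ with the adversary attacking the first $\Gammaone$ items of $I$. Since $\bar{\pmb{c}}$ is non-decreasing on $I$ by Assumption~\ref{assump: item sorted by bar c}, the cheapest unattacked item of $I$ is $j_{\Gammaone+1}$, so $\psi(I,\Gammaone)=\min\{\min_{j\in[n]}\bar{c}_j+\hat{c}_j,\ \bar{c}_{j_{\Gammaone+1}}\}$. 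As $\Gammaone$ grows, $\bar{c}_{j_{\Gammaone+1}}$ increases, hence $\psi(I,\cdot)$ is non-decreasing over $\{0,\dots,\Gamma-1\}$ and its maximum over this range is attained at $\Gammaone=\Gamma-1$, yielding $\min\{\min_{j\in[n]}\bar{c}_j+\hat{c}_j,\ \bar{c}_{j_{\Gamma}}\}$.

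For $\Gammaone=\Gamma$, the adversary attacks the first $\Gamma=|I|$ items, i.e.\ all of $I$, so the term $\min_{j\in I:\gamma_j=0}\bar{c}_j$ is taken over an empty set and drops out, while $\Gammazero=0$ leaves no budget to attack any item of $\bI$; Observation~\ref{obs: psi(I,Gammaone)} then reduces to $\psi(I,\Gamma)=\min\{\min_{j\in[n]}\bar{c}_j+\hat{c}_j,\ \min_{j\in\bI}\bar{c}_j\}$. Writing $U=\min_{j\in[n]}\bar{c}_j+\hat{c}_j$, it remains to combine the two surviving candidates, $\Psi(I)=\max\{\min\{U,\bar{c}_{j_\Gamma}\},\min\{U,\min_{j\in\bI}\bar{c}_j\}\}$, via the elementary identity $\max\{\min\{U,a\},\min\{U,b\}\}=\min\{U,\max\{a,b\}\}$ (valid because $x\mapsto\min\{U,x\}$ is monotone). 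This produces exactly $\min\{U,\max\{\min_{j\in\bI}\bar{c}_j,\bar{c}_{j_\Gamma}\}\}$, the claimed expression.

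The only delicate point is the boundary case $\Gammaone=\Gamma$: here the decision-maker gains access to an unqueried item that cannot be attacked, since the adversary has spent its whole budget inside $I$, which is precisely why $\min_{j\in\bI}\bar{c}_j$ replaces the now-vacuous unattacked-in-$I$ term. Everything else is monotonicity bookkeeping and the min/max identity, so I expect no real obstacle beyond carefully checking that this boundary substitution is exactly what Observation~\ref{obs: psi(I,Gammaone)} delivers.
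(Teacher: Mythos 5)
Your proof is correct and follows essentially the same route as the paper's: both split the maximization over $\Gammaone$ into the cases $\Gammaone<\Gamma$ (where Observation~\ref{obs: psi(I,Gammaone)} and the sorting assumption give $\psi(I,\Gammaone)=\min\{\min_{j\in[n]}\bar{c}_j+\hat{c}_j,\,\bar{c}_{j_{\Gammaone+1}}\}$, maximized at $\Gammaone=\Gamma-1$) and $\Gammaone=\Gamma$ (where the unattacked-in-$I$ term vanishes and $\min_{j\in\bI}\bar{c}_j$ survives), then merge the two candidates with the distributive identity $\max\{\min\{U,a\},\min\{U,b\}\}=\min\{U,\max\{a,b\}\}$. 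Your explicit treatment of the empty-set boundary case and the monotonicity bookkeeping matches the paper's argument, just stated in slightly more detail.
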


\begin{proof}
    In order to determine the value of $\Psi(I)=\max_{\Gammaone\in[\Gamma]_0}\psi(I,\Gammaone)$, we consider two cases: $\Gammaone=\Gamma$ and $\Gammaone<\Gamma$.
    If $\Gammaone=\Gamma$, then $\psi(I,\Gammaone) = \min \{\min_{l\in[\Gamma]} \bar{c}_{j_l}+ \hat{c}_{j_l}, \min_{j \in \bI}\bar{c}_j\} = \min \{\min_{j\in[n]} \bar{c}_j+ \hat{c}_j,\min_{j \in \bI}\bar{c}_j\}$. The latter equivalence holds because 
    $\bar{c}_j \leq \bar{c}_j + \hat{c}_j$ for each $j \in [n]$.
    If $\Gammaone<\Gamma$, by Observation~\ref{obs: psi(I,Gammaone)}, we have that $\psi(I,\Gammaone) = \min\{\min_{j \in [n]\setminus \{j_{\Gammaone+1},\dots,j_{\Gamma} \}} \bar{c}_j+\hat{c}_j, \bar{c}_{j_{\Gammaone+1}}  \} = \min\{\min_{j \in [n]} \bar{c}_j+\hat{c}_j, \bar{c}_{j_{\Gammaone+1}}  \}$, so $\psi(I,\Gammaone)$ is maximized when $\Gammaone=\Gamma-1$, in which case $\psi(I,\Gamma-1)= \min \{ \min_{j\in[n]} \bar{c}_j+\hat{c}_j, \bar{c}_{j_{\Gamma}}\}$. 
    Thus, 
    \begin{align*}
    \Psi(I) &= \max \left(\min \left\{\min_{j\in[n]} \bar{c}_j+ \hat{c}_j,\min_{j \in \bI}\bar{c}_j\right\}, \min \left\{ \min_{j\in[n]} \bar{c}_j+\hat{c}_j, \bar{c}_{j_{\Gamma}}\right \} \right )\\
    &= \min  \left\{ \min_{j\in[n]} \bar{c}_j+\hat{c}_j, \max \left\{\min_{j \in \bI} \bar{c}_j,  \bar{c}_{j_{\Gamma}} \right\}   \right \}.
    \end{align*}
\end{proof}

\begin{lemma}
\label{lemma: |I| > Gamma}
    Let $I = \{j_1,j_2,\dots,j_{\Gamma},j_{\Gamma+1},\dots,j_m \}$ with $\Gamma+1 \leq m \leq n-1$, then 
    \begin{equation*}
    \Psi(I)=\min \left\{ \min_{j\in[n]} \bar{c}_j+\hat{c}_j, \max \left\{ \min \left\{ \bar{c}_{j_{\Gamma+1}}, \min_{j \in \bI}\bar{c}_j\right\}, \bar{c}_{j_{\Gamma}} \right\} \right\}.
    \end{equation*}
\end{lemma}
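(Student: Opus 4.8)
The plan is to mirror the proof of Lemma~\ref{lemma: |I| = Gamma}, but with one extra ingredient coming from the fact that $|I|>\Gamma$. Since $|I|>\Gamma$, we have $\setGammaI=[\Gamma]_0$, so $\Psi(I)=\max_{\Gammaone\in[\Gamma]_0}\psi(I,\Gammaone)$, and I would again split the maximization into the two regimes $\Gammaone<\Gamma$ and $\Gammaone=\Gamma$ dictated by Observation~\ref{obs: psi(I,Gammaone)}. Throughout I would write $M=\min_{j\in[n]}\bar{c}_j+\hat{c}_j$ for brevity and rely on Assumption~\ref{assump: item sorted by bar c} to identify the cheapest unattacked items inside $I$.

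For the regime $\Gammaone<\Gamma$, the computation is identical to the one in Lemma~\ref{lemma: |I| = Gamma}: by Observation~\ref{obs: psi(I,Gammaone)} the adversary attacks $j_1,\dots,j_{\Gammaone}$, the cheapest unattacked item of $I$ is $j_{\Gammaone+1}$, and $\psi(I,\Gammaone)=\min\{M,\bar{c}_{j_{\Gammaone+1}}\}$. Since $\bar{c}$ is non-decreasing within $I$, this is maximized at $\Gammaone=\Gamma-1$, yielding $\max_{\Gammaone<\Gamma}\psi(I,\Gammaone)=\min\{M,\bar{c}_{j_{\Gamma}}\}$.

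The step that differs, and which I expect to be the main (if modest) obstacle to get right, is the case $\Gammaone=\Gamma$. Unlike in Lemma~\ref{lemma: |I| = Gamma}, attacking the first $\Gamma$ items of $I$ no longer exhausts $I$: the items $j_{\Gamma+1},\dots,j_m$ remain with $\gamma_j=0$, so the term $\min_{j\in I:\gamma_j=0}\bar{c}_j$ in Observation~\ref{obs: psi(I,Gammaone)} is now nonvacuous and, by the sorting within $I$, equals $\bar{c}_{j_{\Gamma+1}}$. Combined with the contribution $\min_{j\in\bI}\bar{c}_j$ (which exists because $m\le n-1$ forces $\bI\neq\emptyset$), this gives $\psi(I,\Gamma)=\min\{M,\min\{\bar{c}_{j_{\Gamma+1}},\min_{j\in\bI}\bar{c}_j\}\}$. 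The care needed here is purely bookkeeping: correctly reintroducing $\bar{c}_{j_{\Gamma+1}}$ as the new alternative that the decision-maker can exploit at full budget.

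Finally I would combine the two regimes via the distributive identity $\max\{\min\{M,A\},\min\{M,B\}\}=\min\{M,\max\{A,B\}\}$, applied with $A=\min\{\bar{c}_{j_{\Gamma+1}},\min_{j\in\bI}\bar{c}_j\}$ and $B=\bar{c}_{j_{\Gamma}}$, to obtain
\begin{equation*}
\Psi(I)=\min\left\{\min_{j\in[n]}\bar{c}_j+\hat{c}_j,\ \max\left\{\min\left\{\bar{c}_{j_{\Gamma+1}},\min_{j\in\bI}\bar{c}_j\right\},\ \bar{c}_{j_{\Gamma}}\right\}\right\},
\end{equation*}
which is exactly the claimed expression.
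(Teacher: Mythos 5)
Your proposal is correct and follows essentially the same route as the paper's own proof: the same two-regime split $\Gammaone=\Gamma$ versus $\Gammaone<\Gamma$ via Observation~\ref{obs: psi(I,Gammaone)}, the same identification of $\bar{c}_{j_{\Gamma+1}}$ as the extra unattacked alternative inside $I$ at full budget, and the same distributive $\max$--$\min$ identity to combine the two regimes. No gaps.
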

\begin{proof}
    As before, we consider two cases: $\Gammaone=\Gamma$ and $\Gammaone<\Gamma$. If $\Gammaone=\Gamma$, then $\psi(I,\Gammaone) = \min \{\min_{l\in[\Gamma]} \bar{c}_{j_l}+ \hat{c}_{j_l}, \bar{c}_{j_{\Gamma+1}}, \min_{j \in \bI}\bar{c}_j\} =  \min \{\min_{j\in[n]} \bar{c}_j+ \hat{c}_j, \bar{c}_{j_{\Gamma+1}}, \min_{j \in \bI}\bar{c}_j\}$. If $\Gammaone<\Gamma$, by Observation~\ref{obs: psi(I,Gammaone)}, we have $\psi(I,\Gammaone) = \min\{\min_{j \in [n]\setminus \{j_{\Gammaone+1},\dots,j_m \}} \bar{c}_j+\hat{c}_j, \bar{c}_{j_{\Gammaone+1}}  \} = \min\{\min_{j \in [n]} \bar{c}_j+\hat{c}_j, \bar{c}_{j_{\Gammaone+1}}  \}$, so $\psi(I,\Gammaone)$ is maximized when $\Gammaone=\Gamma-1$, in which case $\psi(I,\Gamma-1)= \min \{ \min_{j\in[n]} \bar{c}_j+\hat{c}_j, \bar{c}_{j_{\Gamma}}\}$. Thus, 
    \begin{align*}
    \Psi(I) &= \max \left(\min \left\{\min_{j\in[n]} \bar{c}_j+ \hat{c}_j, \bar{c}_{j_{\Gamma+1}}, \min_{j \in \bI}\bar{c}_j\right\}, \min \left\{ \min_{j\in[n]} \bar{c}_j+\hat{c}_j, \bar{c}_{j_{\Gamma}}\right\}
    \right)\\
    &=\min \left\{ \min_{j\in[n]} \bar{c}_j+\hat{c}_j, \max \left\{ \min \left\{ \bar{c}_{j_{\Gamma+1}}, \min_{j \in \bI}\bar{c}_j \right\}, \bar{c}_{j_{\Gamma}} \right\} \right\}.
    \end{align*}
\end{proof}

\subsection{NP-hardness of the general case}
\label{subsec: NP-hardness of the general case}


We now show that \eqref{eq:OU} is a hard problem if we do not restrict the set of possible queries $\I$. In the next sections, we discuss special cases for $\I$ that result in polynomially solvable cases. In particular, problem~\eqref{eq:OU} is not necessarily hard if optimizing a linear function over $\I$ is a hard problem, as the knapsack case demonstrates.

\begin{proposition}\label{prop1}
    Problem~\eqref{eq:OU} is NP-hard in the strong sense.
\end{proposition}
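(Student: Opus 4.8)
The plan is to reduce from the strongly NP-hard \emph{Independent Set} problem: given a graph $G=(V,E)$ and an integer $k$ with $1\le k<|V|$, decide whether $G$ has an independent set of size at least $k$ (its independence number $\alpha(G)$ satisfies $\alpha(G)\ge k$). From such an instance I would build an instance of~\eqref{eq:OU} in which the items are the vertices, i.e.\ $[n]=V$; the query family is the collection of independent sets, $\I=\set{I\subseteq V}{I\text{ is independent in }G}$, a downward-closed family presented \emph{implicitly} through $G$; the budget is $\Gamma=k$; and all costs are uniform with $\bar{c}_j=0$ and $\hat{c}_j=1$ for every $j\in[n]$. Every number is then bounded by a polynomial in the input size, so establishing NP-hardness of this family immediately yields hardness in the strong sense.

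The core of the argument is to show, using the closed-form expressions already derived, that the optimum of~\eqref{eq:OU} equals $0$ if and only if $\alpha(G)\ge k$. If $\alpha(G)<k$, then every $I\in\I$ has $|I|<\Gamma$, so Lemma~\ref{obs: |I| < Gamma} gives $\Psi(I)=\min_{j\in[n]}\bar{c}_j+\hat{c}_j=1$, whence $\min_{I\in\I}\Psi(I)=1$. Conversely, if $G$ contains an independent set $S$ with $|S|\ge k$, I would select any $I\subseteq S$ with $|I|=k=\Gamma$, which remains in $\I$ by downward closure. Since $k<|V|$, at least one vertex lies in $\bI$, so $\min_{j\in\bI}\bar{c}_j=0$, and also $\bar{c}_{j_{\Gamma}}=0$; substituting into Lemma~\ref{lemma: |I| = Gamma} gives $\Psi(I)=\min\{1,\max\{0,0\}\}=0$. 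As the costs are nonnegative, $\Psi(I)\ge 0$ always, so $\min_{I\in\I}\Psi(I)=0$ in this case. Consequently, deciding whether the optimum of~\eqref{eq:OU} is at most $1/2$ decides Independent Set, which proves NP-hardness, and the polynomial boundedness of all parameters upgrades it to the strong sense.

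The step I expect to require the most care is not the evaluation of $\Psi$ — that is supplied by the lemmas — but the modeling choices that make the equivalence exact. First, one must insist that $\I$ be given through a compact description (here $G$), since an explicitly listed family would make~\eqref{eq:OU} polynomial by simply evaluating $\Psi$ on each member; the hardness genuinely lives in the combinatorial feasibility question ``does $\I$ contain a feasible set of cardinality $\Gamma$?''. Second, one must rule out the degenerate situation in which a size-$\Gamma$ query exhausts all cost-$0$ items and leaves $\bI$ empty, which would spoil the value $\Psi(I)=0$; the restriction $k<|V|$ is harmless (Independent Set with $k=|V|$ is trivial) and guarantees a cost-$0$ item in $\bI$. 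Finally, I would double-check that the uniform-cost instance is not accidentally solvable by a shortcut, confirming that the only lever available to the minimizer is to query a feasible set of size at least $\Gamma$, which is exactly the strongly NP-hard task we reduced from.
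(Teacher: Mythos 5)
Your proposal is correct and takes essentially the same route as the paper: a reduction from Independent Set with $\bar{c}_j=0$, $\hat{c}_j=1$ for all $j$, $\Gamma=k$, and $\I$ the family of independent sets of $G$, using Lemma~\ref{obs: |I| < Gamma} (value $1$ when no query of size $\Gamma$ exists) and the size-$\Gamma$ case (value $0$) to conclude that the optimum is $0$ if and only if $\alpha(G)\ge k$. Your explicit restriction $k<|V|$, which guarantees $\bI\neq\emptyset$ so that a zero-cost item survives in Lemma~\ref{lemma: |I| = Gamma}, is a small but welcome refinement that the paper's argument implicitly relies on; otherwise the two proofs coincide.
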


\begin{proof}
    Consider the independent set decision problem: Given a graph $G(V,E)$ and a positive integer $K \leq |V|$, does $G$ contain an independent set of size $K$ or more, i.e. a subset $V' \subseteq V$ such that $|V'| \geq K$ and any two vertices in $V'$ are not adjacent? We construct an instance of problem~\eqref{eq:OU} by setting  $n=|V|$, $\Gamma=K$, $\bar{c}_j = 0$ and $\hat{c}_j = 1$, $\forall j \in [n]$, and letting $\I$ consist of the independent sets of the graph $G(V,E)$.
    From Lemma~\ref{obs: |I| < Gamma}, we know that, if $|I| < \Gamma$, the optimal objective value of this instance is $1$. If $|I| \geq \Gamma$, after the discovery of information, there always exists at least one item $j$ with $\delta_j=0$, either in set $I$, or in set $\bI$, so the optimal objective value is $0$.
    Therefore, the independent set decision problem has a \emph{yes} solution if and only if the optimal objective value of this instance is $0$. 
\end{proof}


\subsection{Polynomial-time algorithms for special cases}
\label{subsec: Polynomial-time algorithms for special cases}
We have proved that problem~\eqref{eq:OU} is NP-hard in general. However, for some special cases of $\I$, the problem can be solved in polynomial time. In this section, we provide polynomial algorithms for the two cases $\I^{SEL}$ (selection query sets) and $\I^{KP}$ (knapsack query sets) running in $O(n)$ and $O(n\log n)$, respectively.

\subsubsection{Selection query sets}

In this subsection, we present a polynomial solution for problem~\eqref{eq:OU} with $\I^{SEL}=\set{I \subseteq [n]}{|I| \leq q}$. Notice that while Assumption~\ref{assump: item sorted by bar c} used to prove the correctness of the algorithm, the latter does actually not need to sort the items.

\begin{proposition}
    If $\I = \I^{SEL}$ and $\Gamma \leq q$,
    problem~\eqref{eq:OU} can be solved in $O(n)$ by querying in the optimal $I^*$ the $q$ items with smallest $\bar{\pmb{c}}$, leading to a solution cost of $\min\{ \min_{j \in [\Gamma]}\bar{c}_j + \hat{c}_j, \bar{c}_{\Gamma+1}\}$.
\end{proposition}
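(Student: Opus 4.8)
The plan is to combine the closed-form expressions from Lemmas~\ref{lemma: |I| = Gamma} and~\ref{lemma: |I| > Gamma} with the structural observation that, since $\I^{SEL}$ allows any query set of size at most $q$, the decision-maker is free to choose \emph{which} items to query subject only to a cardinality bound. I would first argue that it is never harmful to query exactly $q$ items: querying more items can only shrink the adversary's effective uncertainty (it turns items whose deviation is adversarially chosen into items whose status is revealed and hence can be avoided in the second stage), so under $\Gamma\le q$ we may assume $|I^*|=q\ge\Gamma$ and invoke Lemma~\ref{lemma: |I| > Gamma} (with the boundary cases $q=\Gamma$ handled by Lemma~\ref{lemma: |I| = Gamma}, and $q=n$ handled by direct inspection).

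Given $|I|=q\ge\Gamma$, the expression to minimize over the choice of $I$ is
\begin{equation*}
\Psi(I)=\min\left\{\min_{j\in[n]}\bar{c}_j+\hat{c}_j,\ \max\left\{\min\left\{\bar{c}_{j_{\Gamma+1}},\min_{j\in\bI}\bar{c}_j\right\},\bar{c}_{j_{\Gamma}}\right\}\right\}.
\end{equation*}
Since the first term $\min_{j\in[n]}\bar{c}_j+\hat{c}_j$ does not depend on $I$, minimizing $\Psi(I)$ reduces to minimizing the second term, i.e.\ the $\max\{\cdot,\cdot\}$ block. The key step is to analyze how this block depends on the query set. I expect to argue that querying the $q$ items of smallest $\bar{\pmb{c}}$ simultaneously minimizes both $\bar{c}_{j_\Gamma}$ (the $\Gamma$-th smallest queried cost) and the inner $\min\{\bar{c}_{j_{\Gamma+1}},\min_{j\in\bI}\bar{c}_j\}$. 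Under Assumption~\ref{assump: item sorted by bar c}, with $I=\{1,\dots,q\}$ the queried items are the $q$ cheapest, so $\bar c_{j_\Gamma}=\bar c_\Gamma$, $\bar c_{j_{\Gamma+1}}=\bar c_{\Gamma+1}$, and $\min_{j\in\bI}\bar c_j=\bar c_{q+1}\ge\bar c_{\Gamma+1}$; hence the inner min is $\bar c_{\Gamma+1}$ and the block collapses to $\max\{\bar c_{\Gamma+1},\bar c_\Gamma\}=\bar c_{\Gamma+1}$, giving the claimed value $\min\{\min_{j\in[\Gamma]}\bar c_j+\hat c_j,\bar c_{\Gamma+1}\}$.

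The heart of the argument is the optimality (not just the value) of this choice, which I would establish by an exchange/monotonicity argument: for any query set $I$ of size $q$ I claim $\bar c_{j_\Gamma}\ge\bar c_\Gamma$ (the $\Gamma$-th cheapest among \emph{any} $q$ items is at least the $\Gamma$-th cheapest overall) and, likewise, the inner term $\min\{\bar c_{j_{\Gamma+1}},\min_{j\in\bI}\bar c_j\}\ge\bar c_{\Gamma+1}$ (every index outside the first $\Gamma$ queried items, whether the $(\Gamma{+}1)$-st queried item or an unqueried one, has cost at least $\bar c_{\Gamma+1}$). Monotonicity of $\min$ and $\max$ then yields that the block is minimized by the greedy cheapest-$q$ choice, so no competing $I$ can do better. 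The main obstacle will be handling the interaction between the two arguments of the outer $\max$ carefully: one must verify that the greedy set minimizes the $\max$ simultaneously in both coordinates rather than trading one off against the other, which is what the inequalities $\bar c_{j_\Gamma}\ge\bar c_\Gamma$ and $\min\{\bar c_{j_{\Gamma+1}},\min_{j\in\bI}\bar c_j\}\ge\bar c_{\Gamma+1}$ guarantee. Finally, for the $O(n)$ complexity I would invoke Theorem~\ref{thm:On} to find the $\Gamma$-th and $(\Gamma{+}1)$-st smallest $\bar c_j$ and evaluate $\min_{j\in[\Gamma]}\bar c_j+\hat c_j$ by a single linear scan, avoiding any explicit sort and thereby justifying the earlier remark that Assumption~\ref{assump: item sorted by bar c} is used only for exposition.
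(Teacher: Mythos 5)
Your overall strategy (reduce to $|I|=q$ via information monotonicity, then compare the closed forms from Lemmas~\ref{lemma: |I| = Gamma} and~\ref{lemma: |I| > Gamma}) can be made to work, and your computation of the value at $I^*=\{1,\dots,q\}$ is correct. However, the step you yourself call the heart of the argument rests on a false inequality. You claim that for \emph{any} query set $I=\{j_1<\dots<j_q\}$ one has $\min\{\bar c_{j_{\Gamma+1}},\min_{j\in\bI}\bar c_j\}\ge\bar c_{\Gamma+1}$, on the grounds that every index outside the first $\Gamma$ queried items has cost at least $\bar c_{\Gamma+1}$. This fails whenever $I$ omits one of the globally cheapest items: take $n=3$, $\Gamma=1$, $q=2$, items sorted by $\bar{\pmb{c}}$, and $I=\{2,3\}$; then $1\in\bI$, so the inner term equals $\min\{\bar c_3,\bar c_1\}=\bar c_1<\bar c_2=\bar c_{\Gamma+1}$. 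Hence the greedy set does \emph{not} dominate coordinatewise, and the ``simultaneous minimization in both coordinates'' your argument relies on simply does not hold.

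The conclusion $\max\{\min\{\bar c_{j_{\Gamma+1}},\min_{j\in\bI}\bar c_j\},\bar c_{j_\Gamma}\}\ge\bar c_{\Gamma+1}$ is nevertheless true, but proving it requires exploiting the trade-off between the two coordinates of the $\max$, exactly what you tried to rule out: either $\{j_1,\dots,j_\Gamma\}=[\Gamma]$, in which case every index in $\bI\cup\{j_{\Gamma+1},\dots,j_q\}$ is at least $\Gamma+1$ and the first coordinate is $\ge\bar c_{\Gamma+1}$; or some element of $[\Gamma]$ is missing from $\{j_1,\dots,j_\Gamma\}$, which forces $j_\Gamma\ge\Gamma+1$ and hence the second coordinate satisfies $\bar c_{j_\Gamma}\ge\bar c_{\Gamma+1}$ (in the counterexample above it is this coordinate that saves the bound). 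With this case distinction your proof goes through. Note that the paper's own proof avoids the issue entirely: it lower-bounds $\Psi(I)$ for an arbitrary $I\in\I$ by swapping the inner minimum and maximum, $\Psi(I)\ge\max_{\pmb{\delta}\in\UG}\min_{j\in[n]}(\bar c_j+\delta_j\hat c_j)\ge\min_{j\in[n]}(\bar c_j+\delta'_j\hat c_j)$ with $\pmb{\delta}'$ attacking $[\Gamma]$, which yields $\Psi(I)\ge\min\{\min_{j\in[\Gamma]}\bar c_j+\hat c_j,\bar c_{\Gamma+1}\}=\Psi(I^*)$ in one line, with no case analysis and without needing the closed-form lemmas or the reduction to $|I|=q$.
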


\begin{proof} 
    First, we determine the value of $\Psi(I^*)=\max_{\Gammaone\in[\Gamma]_0}\psi(I^*,\Gammaone)$. The worst-case occurs when the adversary uses his entire budget to attack the items with the smallest nominal costs, which are in $I^*$. Thus, we have $\Psi(I^*)= \min\{ \min_{j \in [\Gamma]}\bar{c}_j + \hat{c}_j, \bar{c}_{\Gamma+1}\}$.

    Now, considering any solution $I \in \I$, we have 
    \begin{align*}
    \Psi(I)= \max_{\pmb{\gamma} \in \UG} \min_{j\in[n]} \max_{\pmb{\delta} \in \UG(I,\pmb{\gamma})} \bar{c}_j+\delta_j\hat{c}_j
    & \geq  \max_{\pmb{\gamma} \in \UG}\max_{\pmb{\delta} \in \UG(I,\pmb{\gamma})} \min_{j\in[n]}  \bar{c}_j+\delta_j\hat{c}_j \\
    & = \max_{\pmb{\delta} \in \UG}\min_{j\in[n]}  \bar{c}_j+\delta_j\hat{c}_j \\
    & \geq \min_{j\in[n]}  \bar{c}_j+\delta'_j\hat{c}_j 
    \end{align*}
where $\delta'_j = 1$, for each $j \in [\Gamma]$ and $\delta'_j = 0$, for each $j \in \{\Gamma+1,\Gamma+2,\dots,n \}$. Therefore, $\Psi(I) \geq \min\{ \min_{j \in [\Gamma]}\bar{c}_j + \hat{c}_j, \bar{c}_{\Gamma+1}\}=\Psi(I^*)$.
Finally, the complexity of the algorithm follows from Theorem~\ref{thm:On}.
\end{proof}
If $\Gamma > q$, then we fall into the case described in Lemma~\ref{obs: |I| < Gamma}.


\subsubsection{Knapsack query sets}

 Let us recall that $\I^{KP}=\set{I \subseteq[n]}{\sum_{i\in I} a_i \leq C}$. In this subsection, we discuss how to construct an optimal solution $I^* \in \I^{KP}$ in polynomial time. We propose in Lemmas~\ref{lemma: Psi(I) leq Psi(I')} and~\ref{lemma: Psi(I)=Psi(I')} two rules for identifying dominant solutions, which are also valid for the general case. Finally, we present a pseudo-code for solving the whole problem with $\I^{KP}$ in $O(n \log n)$.

\begin{lemma}
\label{lemma: Psi(I) leq Psi(I')}
    Let $I=\{j_1, j_2, \dots, j_{\Gamma} \}$ and $I' = \{j'_1, j'_2, \dots, j'_{\Gamma}\}$ with $j'_{\Gamma} > j_{\Gamma}$. Then, $\Psi(I) \leq \Psi(I')$.
\end{lemma}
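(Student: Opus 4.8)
The plan is to invoke the closed-form expression of Lemma~\ref{lemma: |I| = Gamma} directly, since both $I$ and $I'$ contain exactly $\Gamma$ items. Writing $M=\min_{j\in[n]}\bar{c}_j+\hat{c}_j$ for the term that does not depend on the query set, that lemma gives
\[
\Psi(I)=\min\left\{M,\ \max\left\{\min_{j\in\bI}\bar{c}_j,\ \bar{c}_{j_\Gamma}\right\}\right\},\qquad \Psi(I')=\min\left\{M,\ \max\left\{\min_{j\in\bI'}\bar{c}_j,\ \bar{c}_{j'_\Gamma}\right\}\right\}.
\]
Since the map $t\mapsto\min\{M,t\}$ is non-decreasing, it suffices to prove the inequality $A\le A'$ between the two inner $\max$-terms $A=\max\{\min_{j\in\bI}\bar{c}_j,\bar{c}_{j_\Gamma}\}$ and $A'=\max\{\min_{j\in\bI'}\bar{c}_j,\bar{c}_{j'_\Gamma}\}$.

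Next I would simplify each $\max$-term using Assumption~\ref{assump: item sorted by bar c}. Because the items are sorted so that $\bar{\pmb{c}}$ is non-decreasing, $\min_{j\in\bI}\bar{c}_j=\bar{c}_s$, where $s$ is the smallest index not belonging to $I$. There are then exactly two possibilities. Either $I=\{1,\dots,\Gamma\}$ is the block of the $\Gamma$ cheapest items, in which case $s=\Gamma+1$, $j_\Gamma=\Gamma$, and $A=\max\{\bar{c}_{\Gamma+1},\bar{c}_\Gamma\}=\bar{c}_{\Gamma+1}$; or some item whose index lies below $j_\Gamma$ is missing from $I$, so $s<j_\Gamma$, hence $\bar{c}_s\le\bar{c}_{j_\Gamma}$ and $A=\bar{c}_{j_\Gamma}$. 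The same dichotomy applies to $A'$.

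The final step uses the hypothesis $j'_\Gamma>j_\Gamma$. Since the $\Gamma$ indices of $I$ are listed in increasing order, $j_\Gamma\ge\Gamma$, so $j'_\Gamma>\Gamma$; consequently $I'$ cannot be the bottom block $\{1,\dots,\Gamma\}$ (whose largest index is $\Gamma$), and the dichotomy above forces $A'=\bar{c}_{j'_\Gamma}$. It then remains to check $A\le\bar{c}_{j'_\Gamma}$ in both cases for $I$: when $A=\bar{c}_{j_\Gamma}$ the bound is immediate from $j'_\Gamma>j_\Gamma$ together with the sorting; when $A=\bar{c}_{\Gamma+1}$ it follows from $j'_\Gamma\ge\Gamma+1$ and the sorting. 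In either case $A\le A'$, and therefore $\Psi(I)\le\Psi(I')$.

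I expect the only delicate point to be the bookkeeping around $\min_{j\in\bI}\bar{c}_j$: one must argue cleanly that this quantity collapses to either $\bar{c}_{\Gamma+1}$ or $\bar{c}_{j_\Gamma}$ according to whether $I$ is exactly the cheapest $\Gamma$ items or has a ``gap'' below $j_\Gamma$. To handle possible ties in $\bar{\pmb{c}}$, I would rely on the ``order maintained within the subsets'' part of Assumption~\ref{assump: item sorted by bar c} rather than on strict inequalities between costs, so that all comparisons of the form $\bar{c}_s\le\bar{c}_{j_\Gamma}$ remain valid whenever $s<j_\Gamma$.
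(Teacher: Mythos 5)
Your proposal is correct and takes essentially the same route as the paper's own proof: both invoke Lemma~\ref{lemma: |I| = Gamma} for the two query sets, both use $j'_\Gamma > j_\Gamma \geq \Gamma$ to conclude that the first missing index of $I'$ lies below $j'_\Gamma$ and hence $\Psi(I')=\min\{\min_{j\in[n]}\bar{c}_j+\hat{c}_j,\ \bar{c}_{j'_\Gamma}\}$, and both finish with the same two-case analysis on $I$ (bottom block $\{1,\dots,\Gamma\}$ versus a gap below $j_\Gamma$, which is exactly the paper's dichotomy $i_1>j_\Gamma$ versus $i_1<j_\Gamma$). Your explicit remarks on the monotonicity of $t\mapsto\min\{M,t\}$ and on handling ties via non-strict inequalities are minor presentational refinements of the same argument.
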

\begin{proof}
    Let us introduce the additional notation $\bI= \{i_1, i_2, \dots, i_{n-\Gamma} \}$ and $\bI' =  \{i'_1, i'_2,\dots, i'_{n-\Gamma} \}$.
    Since $|I| = |I'| = \Gamma$, from Lemma~\ref{lemma: |I| = Gamma}, we know that $\Psi(I)= \min \{ \min_{j\in[n]} \bar{c}_j+\hat{c}_j, \max \{\bar{c}_{i_1},  \bar{c}_{j_{\Gamma}} \}    \}$ and $\Psi(I') = \min \{ \min_{j\in[n]} \bar{c}_j+\hat{c}_j, \max \{\bar{c}_{i'_1},  \bar{c}_{j'_{\Gamma}} \}    \}$. Furthermore, we have $j'_{\Gamma} > j_{\Gamma} \geq \Gamma$. Thus, $j'_{\Gamma} \geq \Gamma+1$ and $i'_1 < j'_{\Gamma}$. Therefore, $\Psi(I') = \min \{ \min_{j\in[n]} \bar{c}_j+\hat{c}_j, \bar{c}_{j'_{\Gamma}}    \}$.

    To compare $\Psi(I)$ and $\Psi(I')$, we discuss the following cases:
    \begin{itemize}
        \item If $i_1 > j_{\Gamma}$, then $\Psi(I)= \min \{ \min_{j\in[n]} \bar{c}_j+\hat{c}_j, \bar{c}_{i_1}\}$. Since $i_1 = \Gamma +1$ and $j'_{\Gamma} \geq \Gamma+1$, we have $j'_{\Gamma} \geq i_1$. Thus $\Psi(I) \leq \Psi(I')$.
        \item If $i_1 < j_{\Gamma}$, then $\Psi(I)= \min \{ \min_{j\in[n]} \bar{c}_j+\hat{c}_j, \bar{c}_{j_{\Gamma}}\}$. Since $j'_{\Gamma} > j_{\Gamma}$, $\Psi(I) \leq \Psi(I')$.
    \end{itemize}

    Overall, we have $\Psi(I) \leq \Psi(I')$.
\end{proof}

\begin{lemma}
\label{lemma: Psi(I)=Psi(I')}
    Let $I = \{j_1, j_2, \dots, j_{\Gamma}\}$ and $I' = I \cup \{j_{\Gamma+1}, j_{\Gamma+2},\dots,j_m\}$ with $\Gamma+1 \leq m \leq n-1$. Then, $\Psi(I)=\Psi(I')$.
\end{lemma}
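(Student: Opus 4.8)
The plan is to invoke the closed-form expressions for $\Psi$ already established and show that adding the extra items $j_{\Gamma+1},\dots,j_m$ to $I$ leaves the expression unchanged. Since $I=\{j_1,\dots,j_\Gamma\}$ has $|I|=\Gamma$, Lemma~\ref{lemma: |I| = Gamma} gives
\begin{equation*}
\Psi(I)=\min\left\{\min_{j\in[n]}\bar{c}_j+\hat{c}_j,\ \max\left\{\min_{j\in\bI}\bar{c}_j,\ \bar{c}_{j_\Gamma}\right\}\right\}.
\end{equation*}
Since $I'=\{j_1,\dots,j_m\}$ with $\Gamma+1\le m\le n-1$ has $|I'|>\Gamma$, Lemma~\ref{lemma: |I| > Gamma} applies with $j_{\Gamma+1}$ the $(\Gamma+1)$-th item of $I'$, giving
\begin{equation*}
\Psi(I')=\min\left\{\min_{j\in[n]}\bar{c}_j+\hat{c}_j,\ \max\left\{\min\left\{\bar{c}_{j_{\Gamma+1}},\ \min_{j\in\bI'}\bar{c}_j\right\},\ \bar{c}_{j_\Gamma}\right\}\right\}.
\end{equation*}
So the whole statement reduces to showing that the second arguments of the outer $\min$ agree, i.e. that $\max\{\min_{j\in\bI}\bar{c}_j,\ \bar{c}_{j_\Gamma}\}=\max\{\min\{\bar{c}_{j_{\Gamma+1}},\min_{j\in\bI'}\bar{c}_j\},\ \bar{c}_{j_\Gamma}\}$.

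The key observation is the relationship between the complements: $\bI=\bI'\cup\{j_{\Gamma+1},\dots,j_m\}$, so $\min_{j\in\bI}\bar{c}_j=\min\{\min_{j\in\bI'}\bar{c}_j,\ \min_{\Gamma+1\le l\le m}\bar{c}_{j_l}\}$. Since the items of $I'$ are listed in non-decreasing $\bar{\pmb c}$ order by Assumption~\ref{assump: item sorted by bar c}, we have $\min_{\Gamma+1\le l\le m}\bar{c}_{j_l}=\bar{c}_{j_{\Gamma+1}}$, whence
\begin{equation*}
\min_{j\in\bI}\bar{c}_j=\min\left\{\min_{j\in\bI'}\bar{c}_j,\ \bar{c}_{j_{\Gamma+1}}\right\}.
\end{equation*}
This is exactly the inner term appearing in $\Psi(I')$, so the two second-arguments coincide term-by-term, and therefore $\Psi(I)=\Psi(I')$.

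I would present this as a short direct computation: first state the two closed forms via the two preceding lemmas, then reduce to comparing the $\max\{\cdot,\bar{c}_{j_\Gamma}\}$ terms, and finally supply the one-line identity $\min_{j\in\bI}\bar{c}_j=\min\{\bar{c}_{j_{\Gamma+1}},\min_{j\in\bI'}\bar{c}_j\}$ obtained from the complement decomposition together with the sorting assumption. The only point requiring care—the ``main obstacle,'' though it is minor—is making sure the indices line up correctly: that $j_{\Gamma+1}$ as named in Lemma~\ref{lemma: |I| > Gamma} really is the minimizer of $\bar c$ over the added items $\{j_{\Gamma+1},\dots,j_m\}$, which relies on the convention that both $I$ and $\bI$ remain sorted by $\bar{\pmb c}$ (Assumption~\ref{assump: item sorted by bar c}). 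Once that is pinned down the equality is immediate and no case distinction is needed, unlike in the proof of Lemma~\ref{lemma: Psi(I) leq Psi(I')}.
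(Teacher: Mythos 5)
Your proof is correct, and it shares the paper's skeleton: both invoke Lemma~\ref{lemma: |I| = Gamma} for $\Psi(I)$ and Lemma~\ref{lemma: |I| > Gamma} for $\Psi(I')$ and then argue that the two $\max\left\{\cdot,\bar{c}_{j_\Gamma}\right\}$ terms agree. The difference lies in how that agreement is shown. The paper sets $i_1=\min\bI$ and $i'_1=\min\bI'$ and works through three cases on the relative order of $i_1,i'_1,j_\Gamma,j_{\Gamma+1}$, pinning down explicit index values (e.g.\ $j_\Gamma=\Gamma$, $i_1=\Gamma+1$) in each case. You instead use the disjoint decomposition $\bI=\bI'\cup\{j_{\Gamma+1},\dots,j_m\}$, which together with Assumption~\ref{assump: item sorted by bar c} yields the identity $\min_{j\in\bI}\bar{c}_j=\min\left\{\bar{c}_{j_{\Gamma+1}},\min_{j\in\bI'}\bar{c}_j\right\}$, so that the expression for $\Psi(I)$ transforms term-by-term into that for $\Psi(I')$ with no case distinction at all. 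This buys a shorter and more robust argument: there is no exhaustiveness of cases to check, and it isolates exactly the two hypotheses doing the work, namely the sorting convention and the fact that the added items come after $j_\Gamma$. Your closing caveat on index alignment is not pedantry: if the added items were allowed to precede $j_\Gamma$ in the sorted order (e.g.\ $I=\{3,5\}$, $I'=\{1,3,5\}$), the conclusion fails, because the $\Gamma$-th and $(\Gamma+1)$-th items of $I'$ in Lemma~\ref{lemma: |I| > Gamma} would no longer be $j_\Gamma$ and $j_{\Gamma+1}$; both you and the paper adopt the intended reading that $j_1,\dots,j_m$ lists $I'$ in non-decreasing $\bar{\pmb{c}}$ order.
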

\begin{proof}
    Let us introduce the additional notation $\bI= \{i_1, i_2, \dots, i_{n-\Gamma} \}$ and $\bI' = \{i'_1,i'_2,\dots,i'_{n-m} \}$.
    Since $|I| = \Gamma$ and $|I'| > \Gamma$, from Lemmas~\ref{lemma: |I| = Gamma} and~\ref{lemma: |I| > Gamma}, we know that $\Psi(I)= \min \{ \min_{j\in[n]} \bar{c}_j+\hat{c}_j, \max \{\bar{c}_{i_1},  \bar{c}_{j_{\Gamma}} \}    \}$ and $\Psi(I')=\min \{ \min_{j\in[n]} \bar{c}_j+\hat{c}_j, \max \{ \min \{ \bar{c}_{j_{\Gamma+1}}, \bar{c}_{i'_1}\}, \bar{c}_{j_{\Gamma}} \} \}$.

    To compare $\Psi(I)$ and $\Psi(I')$, we discuss the following cases:
    \begin{itemize}
        \item If $j_{\Gamma} < j_{\Gamma+1} = i_1 < i'_1$, then $j_{\Gamma}=\Gamma$, $j_{\Gamma+1} = i_1 = \Gamma+1$, $i'_1\geq \Gamma+2$ and $\Psi(I)= \min \{ \min_{j\in[n]} \bar{c}_j+\hat{c}_j, \bar{c}_{\Gamma+1} \} = \Psi(I')$.
        \item If $j_{\Gamma} < i_1 =i'_1 < j_{\Gamma+1}$, then $j_{\Gamma}=\Gamma$, $i_1=i'_1 = \Gamma+1$, $j_{\Gamma+1} \geq \Gamma+2$ and $\Psi(I)=\min \{ \min_{j\in[n]} \bar{c}_j+\hat{c}_j, \bar{c}_{\Gamma+1} \} =\Psi(I')$.
        \item If $i_1 = i'_1 < j_{\Gamma} < j_{\Gamma+1}$, then $\Psi(I)=\min \{ \min_{j\in[n]} \bar{c}_j+\hat{c}_j, \bar{c}_{j_{\Gamma}} \} = \Psi(I')$.
    \end{itemize}

    Overall, we have $\Psi(I) = \Psi(I')$.
\end{proof}




Lemma~\ref{lemma: Psi(I)=Psi(I')} implies that, if we have a solution $I$ such that $|I|=\Gamma$, then it is meaningless to observe any additional items with $\bar{c}_j$ greater than or equal to $\max_{j \in I} \bar{c}_j$ (i.e. $c_{j_{\Gamma}}$). Furthermore, from Lemma~\ref{lemma: Psi(I) leq Psi(I')}, we know that, an optimal solution $I$ with $|I|= \Gamma$ seeks to minimize the value of $\max_{j \in I} \bar{c}_j$. However, such a solution must also satisfy $\sum_{i\in I} a_i \leq C$, leading to solving
\begin{equation}
\label{eq: opt I}
    \min_{I=\{j_1,\ldots,j_\Gamma\} \subseteq [n]} \set{j_{\Gamma}}{\sum_{i\in I} a_i \leq C}.
\end{equation}
If problem~\eqref{eq: opt I} is infeasible, then by Lemma~\ref{obs: |I| < Gamma}, the optimal solution of problem~\eqref{eq:OU} selects the item with $\min_{j \in [n]} \bar{c}_j+\hat{c}_j$. In other words, if no solution $I \in \I$ satisfies $|I| \geq \Gamma$, then we can simply select the item with $\min_{j \in [n]} \bar{c}_j+\hat{c}_j$, making further information discovery unnecessary.

We propose a polynomial algorithm to solve problem~\eqref{eq: opt I}. For each $j \in \{\Gamma, \Gamma+1,\dots, n\}$, the solution is constructed as $I(j)=\{j\}\cup A(j)$ where the set $A(j)$ contains $\Gamma-1$ items with the smallest $\pmb{a}$ values from the set of indices $[j-1]$. If this solution satisfies the constraint $\sum_{i\in I(j)} a_i \leq C$, $I(j)$ is an optimal solution. If not, we increment $j$ by $1$. If, until $j=n$, we have not found a solution $I \in \I^{KP}$ such that $|I|=\Gamma$, then problem~\eqref{eq: opt I} is infeasible. At each iteration, we can obtain the set $A(j)$ by re-sorting the items according to $\pmb{a}$. Thus, the whole problem can be solved in $O(n^2 \log n)$.

The previous algorithm can be improved by observing that $A(j)$ can be constructed based on $A(j-1)$ by finding the item $i$ in $\argmin_{i \in [j] \setminus A(j-1) } a_i$ in $O(n)$. Based on this idea, we propose a more effective algorithm (see Algorithm~\ref{algo:1-selection}). Specifically, we first place all items in a list $L$ in non-decreasing order of $\bar{\pmb{c}}$ and place the first $\Gamma$ items in the list $I$ as an initial solution. At each iteration, we add an additional item in $I$, in sequence according to the order. We next remove the item with the highest value of $\pmb{a}$ in $I$. Once the constraint $\sum_{i\in I} a_i \leq C$ is satisfied, the algorithm returns the optimal objective value: $\min \left\{ \min_{j\in[n]} \bar{c}_j+\hat{c}_j, \max \{ \min_{j \in \bI} \bar{c}_j,  \max_{j \in I}\bar{c}_j \}   \right\}$, as given in Lemma~\ref{lemma: |I| = Gamma}. If, by the end, we have not found a set $I$ containing $\Gamma$ items and satisfying the constraint, the algorithm returns the optimal objective value: $\min_{j\in[n]} \bar{c}_j+\hat{c}_j$, as given in Lemma~\ref{obs: |I| < Gamma}. Hence, problem~\eqref{eq:OU} with $\I=\I^{KP}$ can be solved in $O(n^2)$, which can be reduced to $O(n \log n)$ using a binary heap.

We have thus proved the following result.

\begin{proposition}\label{prop3}
If $\I=\I^{KP}$, problem~\eqref{eq:OU} can be solved in $O(n\log n)$ using Algorithm~\ref{algo:1-selection}.
\end{proposition}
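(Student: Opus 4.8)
The plan is to prove two things: that Algorithm~\ref{algo:1-selection} returns an optimal first-stage decision, and that it does so in $O(n\log n)$. For correctness, I would first reduce the optimization of $\Psi$ over $\I^{KP}$ to the combinatorial problem~\eqref{eq: opt I}. By Lemma~\ref{lemma: Psi(I)=Psi(I')}, any feasible query set $I'$ with $|I'|>\Gamma$ has the same objective value as the set $I$ obtained by keeping only its $\Gamma$ items of smallest $\bar c$; since $I\subseteq I'$ we have $\sum_{i\in I}a_i\leq\sum_{i\in I'}a_i\leq C$, so $I$ is also feasible. Hence it suffices to search over query sets with $|I|=\Gamma$, together with the trivial case $|I|<\Gamma$. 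Among the former, Lemma~\ref{lemma: Psi(I) leq Psi(I')} shows that $\Psi$ is nondecreasing in $\bar c_{j_\Gamma}=\max_{j\in I}\bar c_j$, so an optimal size-$\Gamma$ set minimizes $\bar c_{j_\Gamma}$ subject to the knapsack constraint, which is exactly~\eqref{eq: opt I}. Moreover, Lemma~\ref{lemma: |I| = Gamma} gives $\Psi(I)\leq\min_{j\in[n]}\bar c_j+\hat c_j$ for any size-$\Gamma$ set, so whenever~\eqref{eq: opt I} is feasible its best value dominates the value $\min_{j\in[n]}\bar c_j+\hat c_j$ attained by all sets with $|I|<\Gamma$ (Lemma~\ref{obs: |I| < Gamma}); and if~\eqref{eq: opt I} is infeasible, no feasible set reaches size $\Gamma$ and the latter value is returned.

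Next I would establish the loop invariant that drives the algorithm. Scanning items in nondecreasing order of $\bar c$, I claim that after the $k$-th item has been inserted and one item removed, the current set $I$ consists precisely of the $\Gamma$ items of smallest $\pmb{a}$-value among the first $k$ items. This follows by induction: the base case $k=\Gamma$ is immediate, and the inductive step holds because inserting item $k$ and deleting the element of largest $\pmb{a}$ from the resulting $\Gamma+1$ elements keeps exactly the $\Gamma$ smallest-$\pmb{a}$ items among the first $k$. Consequently $\sum_{i\in I}a_i$ equals the minimum possible $\pmb{a}$-sum of a size-$\Gamma$ subset of the first $k$ items, and the smallest index $m$ at which this sum drops to $C$ or below is the optimal value of~\eqref{eq: opt I}.

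The step I expect to be the crux is arguing that, at this first feasible index $m$, item $m$ itself belongs to $I$, so that $\max_{j\in I}\bar c_j=\bar c_m$ and the returned value matches the closed form of Lemma~\ref{lemma: |I| = Gamma}. I would argue by contradiction: if $m\notin I$, then $I\subseteq\{1,\dots,m-1\}$ would be a feasible size-$\Gamma$ set, whence the minimum $\pmb{a}$-sum over the first $m-1$ items would already be at most $C$, contradicting the minimality of $m$. Granting this, $\bar c_{j_\Gamma}=\bar c_m$; together with $\min_{j\in\bI}\bar c_j$, which the algorithm maintains as the $\bar c$-value of the first item ever discarded (or $\bar c_{\Gamma+1}$ if none was discarded, since discarded indices are all smaller than $m$), the returned expression coincides with $\Psi(I)$ as given by Lemma~\ref{lemma: |I| = Gamma}.

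Finally, for the complexity I would account for the initial sort by $\bar c$ in $O(n\log n)$ and then the single sweep of at most $n$ iterations. Each iteration performs one insertion and one extraction of the largest $\pmb{a}$-value, which a binary max-heap keyed on $\pmb{a}$ executes in $O(\log n)$, while the quantities $\min_{j\in[n]}\bar c_j+\hat c_j$, $\max_{j\in I}\bar c_j=\bar c_m$, and $\min_{j\in\bI}\bar c_j$ are updated in constant time along the scan. This yields the overall $O(n\log n)$ bound and, combined with the optimality established above, proves the proposition.
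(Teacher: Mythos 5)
Your overall route is the same as the paper's: use Lemmas~\ref{lemma: Psi(I) leq Psi(I')} and~\ref{lemma: Psi(I)=Psi(I')} (together with Lemmas~\ref{obs: |I| < Gamma} and~\ref{lemma: |I| = Gamma}) to reduce the optimization over $\I^{KP}$ to problem~\eqref{eq: opt I}, then argue that the sweep maintains the $\Gamma$ smallest-$\pmb{a}$ items of each prefix and use a binary heap for the $O(n\log n)$ bound; indeed you are more explicit than the paper, whose proof is essentially the informal discussion preceding the proposition. However, your crux step does not match Algorithm~\ref{algo:1-selection} as printed: the algorithm never tests feasibility of the initial set $\{1,\dots,\Gamma\}$; its first test occurs only after item $\Gamma+1$ has been inserted and the maximum-$\pmb{a}$ item removed. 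Consequently, when $\sum_{i\in[\Gamma]}a_i\leq C$, the algorithm stops at $k=\Gamma+1$, your minimality-based contradiction is unavailable (the minimum $\pmb{a}$-sum over $[k-1]=[\Gamma]$ \emph{is} at most $C$, so nothing is contradicted), and the stopping index need not belong to the returned set $I$, so $\max_{j\in I}\bar{c}_j$ may equal some $\bar{c}_j$ with $j\leq\Gamma$. The conclusion still holds, but it needs a separate two-line argument: in this case the single discarded item $r\in[\Gamma+1]$ lies in $\bI$, and whether $r=\Gamma+1$ or $r\leq\Gamma$ the returned value equals $\min\{\bar{z},\max\{\bar{c}_r,\max_{j\in I}\bar{c}_j\}\}=\min\{\bar{z},\bar{c}_{\Gamma+1}\}=\Psi([\Gamma])$, which is optimal since $[\Gamma]$ solves~\eqref{eq: opt I}. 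You should either add this case or note that inserting a feasibility check of the initial set makes your argument apply verbatim.

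A second, minor slip: $\min_{i\in\bI}\bar{c}_i$ is not in general the $\bar{c}$-value of the \emph{first} discarded item, since an item with smaller $\bar{c}$ can be discarded at a later iteration (e.g., item $1$ discarded at step $\Gamma+2$ if $a_1$ is large). What your argument actually needs, and what is true, is that every discarded item has a smaller index than every unprocessed item, so the minimum of $\bar{c}$ over the discarded set coincides with the minimum over the full complement $[n]\setminus I$ appearing in Lemma~\ref{lemma: |I| = Gamma}; this running minimum is trivially maintained in $O(1)$ per iteration, so the complexity claim is unaffected.
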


\begin{algorithm}[H]
\caption{1-Selection with DDID}
\label{algo:1-selection}
    $I\leftarrow\emptyset$, $\bI\leftarrow\emptyset$\;
    Compute upper bound $\bar{z}= \min_{j\in[n]} \bar{c}_j+\hat{c}_j$\; 
    Sort all items based on $\bar{\pmb{c}}$ non-decreasing in $L$\; 
    Add the first $\Gamma$ items of $L$ to $I$\;
    $k \leftarrow \Gamma+1$\;
    \While{$k \leq n$}{
        Add $k$-th item of $L$ to $I$\;
        Identify the item $j$ with the highest value of $a_j$ in $I$, then remove it from $I$ and place it into $\bI$\; 
        
        \lIf{$\sum_{i\in I} a_i \leq C$}{
            \Return $\min \left\{\bar{z} , \max \{ \min_{i \in \bI} \bar{c}_i,  \max_{j \in I}\bar{c}_j \}   \right\}$
        }
        \Else{
            \lIf{$k < n$}{
                $k \leftarrow k+1$
            }
            \lElse{
                \Return $\bar{z}$
            }}   
    }
\end{algorithm}

\section{Constraint uncertainty}
\label{sec:CU}

\label{DDID with uncertainty constraint}
In this section, we study problem~\eqref{eq:CU}. We derive basic properties in Section~\ref{subsec: Additional definitions and a basic property} and show that the general case is NP-hard in Section~\ref{subsec: con NP-hardness of the general case}. In Section~\ref{subsec: MILP reformulation}, we propose a linear programming formulation for computing the cost of $I$, and dualize it to provide an MILP reformulation of the problem. Last,
we prove in Section~\ref{subsec:ISEL} that the special case of $\I^{SEL}$ can be solved in polynomial time.

\subsection{Additional definitions and basic properties}
\label{subsec: Additional definitions and a basic property}

To simplify expressions, for any $I\in \I$ and $\Gammaone\in \setGammaI$, we define the cost of solution $I$ using exactly $\Gammaone$ deviations on $I$ and $\Gammazero=\Gamma-\Gammaone$ deviations on $\bI$ as
\begin{equation*}
\phi(I,\Gammaone) = \max_{\pmb{\gamma} \in \Delta^{\Gammaone}_=} \min_{\pmb{x}\in \X}\set{\sum_{i\in[n]} c_i x_i}{\sum_{i\in[n]} \delta_i x_i \leq b\ \forall \pmb{\delta}\in \Delta^{\Gamma-\Gammaone}_=(I,\pmb{\gamma})}.
\end{equation*}
Observe that $\Phi(I) = \max_{\Gammaone\in\setGammaI} \phi(I,\Gammaone)$.
Throughout the section, we assume the following without loss of generality.
\begin{assumption}
\label{assump:cnondec}
    All items are sorted so that the costs $\pmb{c}$ are non-decreasing.
\end{assumption}

If $p \leq b$ or $\Gamma \leq b$, the problem always has a feasible and optimal solution with the costs of the first $p$ items. So, in this section, we will only discuss the cases where $p > b$ and $\Gamma > b$. Furthermore, let us denote by $Q=\max_{I\in\I}|I|$ the cardinality of the largest set in $\I$. We provide next conditions that $Q$ must satisfy to guarantee that problem~\eqref{eq:CU} is feasible. While this condition is interesting on its own, it will be leveraged later in Section~\ref{subsec:ISEL} to prove the polynomiality of the problem in the case of $\I^{SEL}$.
\begin{proposition}
\label{prop:conditions}
If $n \geq p + Q$, $p > b$ and $\Gamma > b$, problem~\eqref{eq:CU} is feasible if and only if
\begin{enumerate}
\item $\Gamma \leq b+Q$,
\item $p \leq 2b+Q-\Gamma+1$.
\end{enumerate}
\end{proposition}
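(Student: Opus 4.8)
The plan is to characterize feasibility of~\eqref{eq:CU} by understanding, for a fixed query set $I$, when the inner problem admits a feasible selection $\bx\in\X$ satisfying the robust constraint for every adversarial scenario, and then to determine which cardinality $|I|$ and budget split $\Gammaone$ give the adversary the most power. Recall that $\Phi(I)=\max_{\Gammaone\in\setGammaI}\phi(I,\Gammaone)$, so the problem is infeasible exactly when some $I\in\I$ and some admissible $\Gammaone$ force the inner minimization to be infeasible, i.e. no $\bx\in\X$ can keep $\sum_i\delta_i x_i\le b$ against all $\bbdelta\in\Delta^{\Gamma-\Gammaone}_=(I,\bgamma)$. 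Since $\X=\X^{SEL}$ forces exactly $p$ items to be chosen, the key quantity is: after querying $I$ and observing $\bgamma$, how many of the chosen $p$ items can the adversary make fail? First I would set up this counting argument precisely, exploiting that the adversary reveals $\Gammaone$ failures inside $I$ (which the decision-maker then avoids selecting) and spends the remaining $\Gammazero=\Gamma-\Gammaone$ failures on the unobserved items among the selected ones.

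The core combinatorial observation is the following. The decision-maker wants to pick $p$ items avoiding as many guaranteed failures as possible. Items in $I$ with $\gamma_i=0$ are known-safe; items outside $I$ carry latent risk since the adversary can later place up to $\Gammazero$ deviations on the selected unobserved items. So I would argue that the best strategy selects as many known-safe items as possible: there are $|I|-\Gammaone$ known-safe items in $I$ (the adversary having revealed $\Gammaone$ failures there), plus we must fill the remaining $p-(|I|-\Gammaone)$ slots from $\bI$, on which the adversary can later fail at most $\Gammazero=\Gamma-\Gammaone$ of them. Feasibility for this $(I,\Gammaone)$ then requires the number of forced failures among the $p$ selected items to be at most $b$. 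Quantifying this, the worst case over the adversary's choice of $\Gammaone$ and over $I$ with $|I|=Q$ (the largest available query set, which maximizes the safe pool) yields the two numerical constraints. I expect condition~1, $\Gamma\le b+Q$, to come from the regime where the adversary spends everything revealing failures inside $I$ ($\Gammaone=\Gamma$), forcing us to draw selected items from $\bI$ or from revealed failures; and condition~2, $p\le 2b+Q-\Gamma+1$, to come from balancing the split so the adversary's residual budget $\Gammazero$ on the unobserved selected items is maximally harmful. The hypotheses $n\ge p+Q$, $p>b$, $\Gamma>b$ ensure there are enough items to select outside a full query set and place us in the nontrivial regime isolated before the proposition.

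The proof would proceed in two directions. For necessity, I would show that if either inequality is violated then there is an $I\in\I$ (taking $|I|=Q$) and an admissible $\Gammaone$ making $\phi(I,\Gammaone)$ infeasible: I construct the adversary's revelation and deviation explicitly so that, no matter which $p$ items are selected, more than $b$ fail. For sufficiency, assuming both inequalities hold, I would exhibit for every $I\in\I$ and every $\Gammaone\in\setGammaI$ a selection $\bx\in\X$ whose forced-failure count is at most $b$; concretely, choose the $|I|-\Gammaone$ revealed-safe items in $I$ first, then top up from $\bI$, and verify via the two inequalities that the adversary's remaining $\Gammazero$ deviations cannot push the failure count above $b$. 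The delicate point is that $\setGammaI$ ranges over $[\min(|I|,\Gamma)]_0$, so I must check that the worst case really is attained at $|I|=Q$ and at the extremal split, rather than at some interior value; monotonicity of the safe-pool size in $|I|$ and of the adversarial damage in $\Gammazero$ should settle this.

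The main obstacle I anticipate is getting the boundary counting exactly right, in particular the ``$+1$'' in condition~2 and the interaction between the two stages of adversarial action: the $\Gammaone$ revealed failures are \emph{avoidable} (they only shrink the usable part of $I$ by forcing selection elsewhere), whereas the $\Gammazero$ latent failures are \emph{unavoidable} on whatever unobserved items we are forced to select. Disentangling ``how many selected items are unobserved'' from ``how many of those the adversary fails'' is where an off-by-one error is easiest to make, so I would track the exact cardinalities $|I|-\Gammaone$, $p-(|I|-\Gammaone)$, and $\min\{\Gammazero,\,p-(|I|-\Gammaone)\}$ carefully and confirm the extremal $\Gammaone$ maximizing the right-hand counting expression yields precisely the stated bounds.
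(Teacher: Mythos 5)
Your proposal follows essentially the same route as the paper's proof: both reduce feasibility to the counting condition that the $p-(|I|-\Gammaone)$ selected items forced outside the known-safe pool incur at most $b$ failures, restrict attention to a query set of maximum cardinality $Q$ by monotonicity, and identify the extremal split $\Gammaone=\Gamma-b-1$ (leaving $\Gammazero=b+1$) as the binding case that yields $p\leq 2b+Q-\Gamma+1$, with condition~1 arising from the adversary exhausting the queried set while keeping more than $b$ units of budget for $\bI$. One small correction to your stated intuition: in the necessity of condition~1 the adversary sets $\Gammaone=|I|\leq Q$, not $\Gammaone=\Gamma$ (which is impossible there since $\Gammaone\leq|I|<\Gamma$); with that fixed, your counting matches the paper's argument.
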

\begin{proof}
    Sufficiency($\Leftarrow$): Suppose that $\Gamma \leq b+Q$ and $p \leq 2b+Q-\Gamma+1$, we show that there is a feasible solution for $\pmb{x}\in \X$. 
    \begin{itemize}
        \item If $\Gammaone=Q$, then $\sum_{i\in \bI}\delta_i = \Gamma - \Gammaone \leq b$. Because $n \geq p + Q$, there is a feasible solution $\pmb{x}$ such that $\sum_{i\in \bI}x_i = p$.
        \item If $\Gammaone<Q$, we discuss two cases. If $\sum_{i\in \bI}\delta_i = \Gamma - \Gammaone \leq b$, then $n \geq p + Q$ implies that there is a feasible solution $\pmb{x}$ such that $\sum_{i\in \bI}x_i = p$. Otherwise, $\sum_{i\in \bI}\delta_i = \Gamma - \Gammaone > b$, and there is a feasible solution $\pmb{x}$ if and only if $p \leq Q-\Gammaone+b$. The worst-case of the latter condition arises when $\Gammaone$ takes its largest value (not greater than $Q$). We claim that this when $\Gammaone=\Gamma-b-1$: since $b+1 \leq \Gamma \leq b+Q$, we also have $0 \leq \Gamma-b-1 \leq Q-1$, proving the claim. Since $p \leq 2b+Q-\Gamma+1$, the problem is feasible.
    \end{itemize}
    Necessity($\Rightarrow$): To see that the first condition is necessary, it is enough to observe that if $\Gamma > b+Q$, the adversary has enough budget to target all observed items as well as $b+1$ non-observed items, preventing any solution $\pmb{x} \in \X$ to satisfy $\sum_{i\in [n]} \delta_ix_i \leq b$.
 
 Let us now consider the second condition. Suppose that $\Gamma \leq b+Q$ and $p > 2b+Q-\Gamma+1$. If $\Gammaone=\Gamma-b-1$, then $\sum_{i \in \bI} \delta_i = \Gamma-\Gammaone=b+1$, so there is a feasible solution $\pmb{x}$ if and only if $p \leq Q-\Gammaone+b$. Since $p > 2b+Q-\Gamma+1$, the problem is not feasible.
\end{proof}

\subsection{NP-hardness of the general case}
\label{subsec: con NP-hardness of the general case}

We now show that problem~\eqref{eq:CU} is hard to solve. In contrast to Propositions~\ref{prop1} and \ref{prop3}.

\begin{proposition}
\label{prop:CUhardKP}
Problem~\eqref{eq:CU} is NP-hard even when $\I=\I^{KP}$.
\end{proposition}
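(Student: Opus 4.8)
The plan is to reduce from a weakly NP-hard number problem, namely \textsc{Subset Sum} (equivalently \textsc{Partition}): given positive integers $w_1,\dots,w_k$ and a target $T$, decide whether some subset sums to exactly $T$. This choice is guided by the statement itself. Since $\I^{KP}$ is literally a knapsack set, and since~\eqref{eq:OU} is polynomial over $\I^{KP}$ (Proposition~\ref{prop3}) yet only strongly NP-hard for arbitrary $\I$ (Proposition~\ref{prop1}), the hardness of~\eqref{eq:CU} over $\I^{KP}$ should stem from the \emph{numeric} weights of the knapsack rather than from a purely combinatorial gadget; this also matches the fact that the statement claims plain, rather than strong, NP-hardness. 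Accordingly I would place the integers on a block of ``number items'' by setting the query weights $a_i=w_i$ and the capacity $C=T$, so that a query set drawn from this block is feasible for $\I^{KP}$ exactly when $\sum_{i\in I}w_i\le T$, the extremal sets being those that attain $\sum_{i\in I}w_i=T$.

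The heart of the construction is to pick the costs $\pmb{c}$, a few auxiliary items, and the parameters $p,b,\Gamma$ so that the worst-case value $\Phi(I)=\max_{\Gammaone\in\setGammaI}\phi(I,\Gammaone)$ crosses a prescribed threshold precisely when $I$ saturates the capacity. To evaluate $\Phi(I)$ I would reuse the case analysis already present in the proof of Proposition~\ref{prop:conditions}. For a fixed query set $I$ and a fixed number $\Gammaone$ of revealed failures, the worst $\pmb{\gamma}$ reveals the $\Gammaone$ cheapest queried items as failed (removing our cheapest safe options), after which our optimal recourse selects the $p$ cheapest items keeping the worst-case number of failures at most $b$. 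This splits into a benign regime $\Gammazero=\Gamma-\Gammaone\le b$, where the unobserved items are harmless and we merely avoid the revealed failures, and a dangerous regime $\Gammazero>b$, where at most $b$ unobserved items may be selected and the remaining $p-b$ must come from the safe (queried, unfailed) items in $I$. It is the dangerous regime that gives querying its value, and I would calibrate the instance so that assembling $p$ items cheaply, while avoiding an expensive ``penalty'' item, is possible if and only if the queried weight equals $T$; concretely, one natural calibration couples each $c_i$ to $a_i$ so that the cost saved by a safe queried item grows with $a_i$, turning ``minimize worst-case cost'' into ``maximize $\sum_{i\in I}a_i$ subject to $\sum_{i\in I}a_i\le C$'', whose optimum reaches $C$ iff the \textsc{Subset Sum} instance is a \emph{yes}-instance.

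The equivalence would then read: the \textsc{Subset Sum} instance is a \emph{yes}-instance if and only if the optimum of~\eqref{eq:CU} is at most the threshold, and the reduction is plainly polynomial. The step I expect to be the main obstacle is controlling the adversary in the dangerous regime for \emph{near-miss} query sets. The adversary's attack on the $\Gammaone$ cheapest queried items couples the items and could, in principle, let a set with $\sum_{i\in I}w_i<T$ still reach the threshold through a favourable filling; I must show that the outer maximization over $\Gammaone\in\setGammaI$ and over $\pmb{\gamma}$ always punishes such partial fillings, so that beating the threshold requires $\sum_{i\in I}w_i=T$ exactly. Making this penalty tight — rather than only verifying that exact sets are good — is what forces the exact subset-sum structure, and it is precisely here that the costs $\pmb{c}$, the value of $b$, and the feasibility bounds of Proposition~\ref{prop:conditions} must be balanced carefully.
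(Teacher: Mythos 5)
Your high-level plan coincides with the paper's actual proof: the paper also reduces from \textsc{Partition}, places the numbers on the query weights ($a_i=k_i$, $C=K$), and couples costs to weights ($c_i=-k_i$) so that minimizing the worst-case cost over safe queried items becomes maximizing $\sum_{i\in I}a_i$ subject to the knapsack constraint, the threshold $-K$ being reached exactly when the queried weight saturates $C$. The problem is that your text stops where the proof starts. The calibration you repeatedly defer --- the values of $p$, $b$, $\Gamma$, the construction of the auxiliary ``penalty'' item, and the argument that the adversary cannot rescue a near-miss query set --- is the entire mathematical content of the reduction, and you explicitly leave it as an unresolved obstacle. As written there is no instance to check and no equivalence to verify, so this is a genuine gap rather than a completed proof by the same route.

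For comparison, here is how the paper resolves precisely the obstacle you name, with three interlocking choices. (i) Take $\Gamma=1$ and $b=0$, so there is no case analysis over how much budget is revealed: the adversary either reveals one queried failure or keeps its single unit for the unobserved items. (ii) Add one extra item with $c_{m+1}=-K-1$ and $a_{m+1}=K+1$: it can never be queried (its weight alone exceeds $C$), and it is the single cheapest item, so the moment the adversary reveals a failure $\gamma_{i'}=1$ its budget is exhausted, the extra item is guaranteed safe, and the minimizer takes $S\setminus\{i'\}$ together with it at cost at most $-K-1<-K$; hence the adversary's only sensible play in problem~\eqref{eq:CU} is $\pmb{\gamma}=\pmb{0}$. (iii) With $\pmb{\gamma}=\pmb{0}$, $b=0$, and one unit of budget unspent, every unobserved item can be made to fail, so the minimizer must select queried items only; with $p=m/2$ this gives cost $-k(S)$ when $|S|=m/2$ (and infeasibility when $|S|<m/2$), which equals $-K$ if and only if $S$ is a partition, while near-miss sets automatically cost $-k(S)>-K$ with no adversary action needed --- your feared ``favourable filling'' cannot occur because the recourse has no choice left at all. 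Note finally that the paper reduces from the cardinality-constrained form of \textsc{Partition} ($|S|=m/2$), not plain \textsc{Subset Sum}: the selection problem requires exactly $p$ items, so the cardinality of the target subset must be known in advance to set $p$; your opening claim that the two are interchangeable here is a further point your reduction would have to repair.
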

\begin{proof}
We consider a reduction from the partition problem, which given the integers $k_1,\ldots,k_m$ that sum up to $2K$, seeks a set $S$ of cardinality $|S|=m/2$ such that $\sum_{i\in S}k_i=\sum_{i\in[m]\setminus S}k_i=K$. We define the following instance of~\eqref{eq:CU} by setting $n=m+1$, $a_i=k_i$ and $c_i=-k_i$ for each $i\in[m]$, $a_{m+1}=K+1$, $c_{m+1}=-K-1$, $C=K$, $p=m/2$, $\Gamma=1$ and $b=0$. We prove next that there exists a \emph{yes} solution to the partition problem if and only if the optimal solution cost of the constructed instance of~\eqref{eq:CU} has a cost of $-K$.

Let $S\subseteq[m]$ be some set and define $I=S$. We see that $I$ is feasible for the outermost minimization problem if and only if $k(S)\leq C$. 

Suppose first that $|S|<m/2$ and consider the solution $\pmb{\gamma}=(0,\ldots,0)$ for the maximization problem. Then, for any solution $\pmb{x}\in \X$, there must be $i'\in[n]\setminus S$ such that $x_{i'}=1$, so $\pmb{x}$ turns out to be infeasible for the constraint associated with $\pmb{\delta}\in\UG(I,(0,\ldots,0))$ such that $\delta_{i'}=1$.

Therefore, we assume next that $|S|=m/2$ and we look at the maximization problem.  Suppose $\gamma_{i'}=1$ for some $i'\in S$. In this case, there is a feasible solution for the inner minimization problem for which $x_{i}=1$ for $i\in S\setminus \{i'\}$ and $x_{m+1}=1$, with cost less than $-K$. Otherwise, suppose $\pmb{\gamma}=(0,\ldots,0)$. Then, the only feasible solution for the inner minimization problem is $x_i=1$ if and only if $i\in S$, with cost exactly $-K$. Thus, this latter choice of $\pmb{\gamma}$ is optimal for the maximization problem and the overall cost of the solution is $-k(S)$, which is equal to $-K$ if and only if $S$ is a \emph{yes} solution to the partition problem.
\end{proof}

\subsection{Linear programming formulations}
\label{subsec: MILP reformulation}
\subsubsection{Linear formulation for $\Phi$}

This section aims to propose a linear programming formulation for $\Phi(I)$. Let us start by rewriting function $\Phi(I)$ as
\begin{equation*}
    \Phi(I) = \max_{\pmb{\gamma} \in \UG} \min_{\pmb{x}\in \X}\set{\pmb{c}^T\pmb{x}}{\sum_{i \in I} \gamma_i x_i + \sum_{i \in \bI} \delta_i x_i  \leq b,\quad \forall \pmb{\delta}\in \UG(I,\pmb{\gamma})},
\end{equation*}
where, as before, $\bI$ is the complement of $I$. Observe the minimization problem is a robust combinatorial optimization problem with one robust constraint with budgeted uncertainty (and budget equal to $\Gamma - \sum_{i\in I} \gamma_i$). Lemma 2 from \cite{Alvarez-MirandaLT13} provides a method to compute the robust counterpart of this problem. In order to apply this result to our minimization problem more intuitively, we express the uncertain intervals of $\pmb{\gamma}$ and $\pmb{\delta}$ as $\gamma_i \in [\gamma_i, \gamma_i]$, for each $i \in I$, and $\delta_i\in[0,1]$, for each $i \in \bI$, where we can relax the binary restriction on $\pmb{\delta}$ because the budget ($\Gamma - \sum_{i\in I} \gamma_i$) is integer. Therefore, we obtain:

\begin{align*}
\Phi(I) = \max_{\pmb{\gamma} \in \UG} \min\Bigg[
&\min \set{\pmb{c}^T\pmb{x}}{\sum_{i\in I} \gamma_i x_i
+\sum_{i\in \bI} x_i \leq b, \sum_{i \in [n]} x_i \geq p, \pmb{x} \in \{0,1\}^{n}},\\
&\left.\min \set{\pmb{c}^T\pmb{x}}{\sum_{i\in I} \gamma_i x_i
+\Gamma - \sum_{i\in I} \gamma_i\leq b,\sum_{i \in [n]} x_i \geq p, \pmb{x} \in \{0,1\}^{n}}\right].
\end{align*}
We next introduce a property of the two inner minimization problems. Given a binary vector $\pmb{a}'$ and an integer $b'$, we define the following general set
\begin{equation*}
\X'=\set{\pmb{x}\in\mathbb{R}^n_+}{(\pmb{a}')^T \pmb{x} \leq b', \sum_{i \in [n]} x_i \geq p, x_i \leq 1, \forall i \in [n]}.
\end{equation*}

\begin{observation}
\label{obs:unimodular}
    $\X'$ is an integral polytope.
\end{observation}

\begin{proof}
    We prove that the matrix defining the constraints is Totally Unimodular (TU). For the first two constraints: $(\pmb{a}')^T \pmb{x} \leq b'$ and $ \sum_{i \in [n]} x_i \geq p$, every entry in this coefficient matrix with only two rows on $\pmb{x}$ is either 0 or 1. So, each square non-singular submatrix of this matrix is unimodular. Therefore, the first two rows form a totally unimodular. Last, the constraints $\pmb{x}\leq \pmb{1}$ form an identity matrix, so overall, the constraints of the problem form a TU matrix.
\end{proof}

Thanks to Observation~\ref{obs:unimodular}, we can relax the integrality restriction on $\pmb{x}$. Introducing an epigraphic formulation for the outermost minimum and dualizing the two inner linear programs, we obtain
\begin{align*}
\Phi(I) = \max \;&\omega\\
\mbox{s.t.}\;& \omega \leq p\cdot\alpha^0-\one^T\pmb{\beta}^0-b\cdot\lambda^0\\
& \alpha^0-\beta_i^0-\GammaI \cdot \lambda^0\leq c_i, \quad & \forall i\in I \\
 &\alpha^0-\beta_i^0-\lambda^0\leq c_i, \quad & \forall i\in \bI \\
 & \omega \leq p\cdot\alpha^1-\one^T\pmb{\beta}^1-(b+\sum_{i\in I}\GammaI - \Gamma)\lambda^1\\
& \alpha^1-\beta_i^1-\GammaI \cdot \lambda^1\leq c_i, \quad &\forall i\in I \\
 &\alpha^1-\beta_i^1\leq c_i, \quad &\forall i\in \bI \\
 & \sum_{i\in I}\GammaI \leq \Gamma \\
 & \pmb{\gamma}\in\{0,1\}^{|I|},\;\alpha^0,\pmb{\beta}^0,\lambda^0,\alpha^1,\pmb{\beta}^1,\lambda^1 \geq 0.
\end{align*}

For any $\GammaI \in \setGammaI$, let us introduce here the notation $I_\GammaI\subseteq I$ to denote the set of the first $\GammaI$ elements of $I$.

\begin{proposition}
\label{pro:smallestcost}
For any $\GammaI \in \setGammaI$, there exists a solution $\pmb{\gamma}^*$ to the maximization problem defining $\Phi(I)$ such that $\gamma^*_i=1$ for $i\in I_\GammaI$.
\end{proposition}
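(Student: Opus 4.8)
The plan is to avoid the dualized MILP and argue directly on the max--min form, since the MILP was obtained from it by LP duality (valid by Observation~\ref{obs:unimodular}), so optimal $\pmb\gamma$'s correspond. Because $\Phi(I)=\max_{\GammaI\in\setGammaI}\phi(I,\GammaI)$, I fix $\GammaI$ and prove the claim for $\phi(I,\GammaI)=\max_{\pmb\gamma}\min_{\pmb x}\{\pmb{c}^\top\pmb{x}:\ldots\}$, where $\pmb\gamma\in\{0,1\}^{|I|}$ ranges over vectors with $\sum_{i\in I}\gamma_i=\GammaI$. Writing the robust constraint through its worst case over $\pmb\delta$, a selection $\pmb x$ is feasible for a given $\pmb\gamma$ if and only if
\[
\sum_{i\in I}\gamma_i x_i + \min\Bigl\{\textstyle\sum_{i\in\bI}x_i,\ \Gamma-\GammaI\Bigr\}\le b .
\]
Identifying $\pmb\gamma$ with the set $S=\{i\in I:\gamma_i=1\}$ of confirmed-failing items, the claim becomes: $S=I_\GammaI$ (the $\GammaI$ cheapest items of $I$) is an optimal adversary choice.

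The core is an exchange argument. Let $S$ be an optimal adversary set and suppose $S\neq I_\GammaI$. Since $|S|=|I_\GammaI|=\GammaI$, there exist $i\in S\setminus I_\GammaI$ and $i'\in I_\GammaI\setminus S$, and by Assumption~\ref{assump:cnondec} together with the definition of $I_\GammaI$ we have $c_{i'}\le c_i$. I will show that $S'=(S\setminus\{i\})\cup\{i'\}$ is also optimal, and then repeat: each swap increases $|S\cap I_\GammaI|$ by one, so after at most $\GammaI$ steps we reach $S=I_\GammaI$ and take $\pmb\gamma^*=\indicator_{I_\GammaI}$.

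To prove $S'$ is optimal it suffices to show that the defender's optimal cost under $S'$ is at least that under $S$; combined with optimality of $S$ this forces equality. I would take a defender-optimal $\pmb x'$ for $S'$ and transform it into a selection $\pmb x$ that is feasible for $S$ with $\pmb{c}^\top\pmb x\le\pmb{c}^\top\pmb x'$, splitting into cases on $(x'_i,x'_{i'})$. The key structural fact is that $i,i'\in I$, so any change confined to coordinates $i$ and $i'$ leaves both $\min\{\sum_{\bI}x_k,\Gamma-\GammaI\}$ and the cardinality $\sum_k x_k$ untouched; only $\sum_{k\in S}x_k$ can move. When $x'_i=x'_{i'}$, the sum over $S$ equals the sum over $S'$, so $\pmb x'$ is already feasible for $S$ at the same cost; when $x'_i=0,x'_{i'}=1$ the confirmed-failing count can only drop under $S$, so $\pmb x'$ stays feasible; and when $x'_i=1,x'_{i'}=0$ I set $x_i=0,x_{i'}=1$, which keeps $\sum_{k\in S}x_k=\sum_{k\in S'}x'_k$ while changing the cost by $c_{i'}-c_i\le 0$.

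I expect the only delicate step to be the bookkeeping in this last case: checking that swapping the selected expensive confirmed item $i$ for the unselected cheaper unconfirmed item $i'$ preserves the exact value of the worst-case constraint (hence feasibility under $S$) while not raising the cost. Once that single identity is verified the remaining cases are immediate, the exchange goes through, and $\pmb\gamma^*=\indicator_{I_\GammaI}$ furnishes the desired optimal solution with $\gamma^*_i=1$ for all $i\in I_\GammaI$.
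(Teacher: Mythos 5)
Your proposal is correct, and it reaches the conclusion by a genuinely different route than the paper. Both proofs are exchange arguments --- swap an attacked item for a cheaper unattacked item of $I$ and show the adversary loses nothing --- but the paper carries out the swap \emph{inside the dualized linear program} defining $\Phi(I)$: it fixes the multipliers $\alpha^\ell,\lambda^\ell$, uses that the optimal $\beta$'s are positive parts of reduced costs (e.g.\ $\beta_k^0=\pospart{\tilde c_k^0-\gamma_k\lambda^0}$ with $\tilde c_k^\ell=-c_k+\alpha^\ell$), and compares $\omega$ with $\omega'$ through a case analysis on signs. You instead stay on the primal max--min side: you write the robust constraint in closed form as $\sum_{i\in I}\gamma_i x_i+\min\{\sum_{i\in\bI}x_i,\,\Gamma-\GammaI\}\le b$, and for the swapped adversary set $S'$ you convert a defender-optimal $\pmb{x}'$ into a selection feasible for $S$ of no larger cost, by cases on $(x'_i,x'_{i'})$; the decisive case $x'_i=1$, $x'_{i'}=0$ indeed works because $S\setminus\{i\}=S'\setminus\{i'\}$ and $i,i'\in I$, so the $\bI$-term and the cardinality are untouched, the worst-case constraint value is preserved exactly, and the cost changes by $c_{i'}-c_i\le 0$. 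Your version is more elementary: it avoids both the closed-form characterization of the optimal $\beta$'s (which the paper asserts rather than derives) and the sign analysis, at the price of one bridging step --- that optimal $\pmb{\gamma}$'s for the max--min form and for the dualized form coincide --- which you correctly justify, since by Observation~\ref{obs:unimodular} the inner minimization is an exact LP and strong duality makes the two maximization problems have identical objective values for every fixed $\pmb{\gamma}$. What the paper's route buys in exchange is that it never leaves the formulation that is subsequently turned into the MILP of Section~\ref{sec:mip}, so its conclusion plugs into that reformulation with no translation step.
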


\begin{proof}
Suppose that there are $i,j\in I$, with $c_i\leq c_j$, such that $\GammaI=0$ and $\gamma_j=1$. We prove next that swapping the indices $i$ and $j$ in vector $\pmb{\gamma}$ leads to a solution whose cost is not less than the cost of the original solution. To simplify the notations throughout, we define $\tilde{c}_i^{\ell} = -c_i + \alpha^\ell$ for $\ell\in\{0,1\}$.
Observe that in an optimal solution, $\beta_k^0=\pospart{\tilde{c}_k^0-\gamma_k\lambda^0}$ for each $k\in I$ and  $\beta_k^0=\pospart{\tilde{c}_k^0-\lambda^0 }$ for each $k\in\bI$, $\beta_k^1=\pospart{\tilde{c}_k^1-\gamma_k\lambda^1 }$ for each $k\in I$ and  $\beta_k^1=\pospart{\tilde{c}_k^1}$ for each $k\in\bI$.
This implies that $\beta_i^0=\pospart{\tilde{c}_i^0}$, and  $\beta_j^0=\pospart{\tilde{c}_j^0-\lambda^0 }$. Furthermore, in an optimal solution,
\begin{equation*}
\omega = \min\left\{p\cdot\alpha^0-\one^T\pmb{\beta}^0-b\cdot\lambda^0,
p\cdot\alpha^1-\one^T\pmb{\beta}^1-(b+\sum_{i\in I}\GammaI - \Gamma)\lambda^1\right\}.
\end{equation*}

Next, we define the swapped vector $\pmb{\gamma}'$ with $\gamma'_i=1$, $\gamma'_j=0$ and $\gamma'_k=\gamma_k$ for $k\in I\setminus\{i,j\}$. 
We further define $\beta_i^{'0}= \pospart{ \tilde{c}_i^0- \lambda^0}$, $\beta_j^{'0} = \pospart{\tilde{c}_j^0}$ and $\beta_k^{'0}=\beta_k^{0}$, for $k\in I\setminus\{i,j\}$. $\omega '$ is defined as follows,
\begin{equation*}
\omega' = \min\left\{p\cdot\alpha^0-\one^T\pmb{\beta}^{'0}-b\cdot\lambda^0,
p\cdot\alpha^1-\one^T\pmb{\beta}^{'1}-(b+\sum_{i\in I}\GammaI - \Gamma)\lambda^1\right\}.
\end{equation*}
To simplify the notation, we introduce that $\omega_1 = p\cdot\alpha^0-\one^T\pmb{\beta}^{0}-b\cdot\lambda^0$, $\omega_2 = p\cdot\alpha^1-\one^T\pmb{\beta}^1-(b+\sum_{i\in I}\GammaI - \Gamma)\lambda^1$, $\omega '_1 = p\cdot\alpha^0-\one^T\pmb{\beta}^{'0}-b\cdot\lambda^0$ and $\omega '_2 = p\cdot\alpha^1-\one^T\pmb{\beta}^{'1}-(b+\sum_{i\in I}\GammaI - \Gamma)\lambda^1$.
First, to prove that $\omega '_1 \geq \omega_1$, we consider the following cases, given $\alpha^0, \lambda^0 $ fixed:
\begin{itemize}
\item If $\tilde{c}_i^0-\lambda^0 \geq 0 $ and $\tilde{c}_j^0-\lambda^0 \geq 0$: $\omega '_1 - \omega_1 = 0$.
\item If $\tilde{c}_i^0-\lambda^0 \geq 0$ and $0 \leq \tilde{c}_j^0 \leq \lambda^0$: $\omega'_1 - \omega_1 = -\tilde{c}_j^0 + \lambda^0 \geq 0$.
\item If $0 \leq \tilde{c}_i^0 \leq \lambda^0$ and $0 \leq \tilde{c}_j^0 \leq \lambda^0$: $\omega'_1 - \omega_1 = \tilde{c}_i^0 - \tilde{c}_j^0 \geq 0$.
\item If $\tilde{c}_i^0 \geq 0$ and $\tilde{c}_j^0 \leq 0$: $\omega'_1 - \omega_1 = \tilde{c}_i^0 - \pospart{\tilde{c}_i^0 - \lambda^0} \geq 0$.
\item If $\tilde{c}_i^0 \leq 0$ and $\omega'_1 - \omega_1 = 0$.
\end{itemize}
In all cases, $\omega'_1 \geq \omega_1$, similarly, with regard to $\beta^1$, the same conclusion holds true: $\omega'_2 \geq \omega_2$. Therefore, $\omega ' \geq \omega$, indicating that $\gamma'$ represents a solution at least as good as $\gamma$, proving the result.
\end{proof}

Thanks to Proposition~\ref{pro:smallestcost}, given a budget $\Gammaone$ of deviations for $\pmb{\gamma}$, the adversary should attack those having the smallest $c_i$. 
Thus, $\Phi(I)$ can be reformulated as
the smallest $\eta$ such that for all $\Gammaone \in\setGammaI$ it holds that:
\begin{align*}
  \eta \geq \max\; & \omega\\
 \mbox{s.t.}\; & \omega \leq p\cdot\alpha^0-\one^T\pmb{\beta}^0-b\cdot\lambda^0 & &&[e^0]\\
& \alpha^0-\beta_i^0-\lambda^0\leq c_i,  & \forall i\in I_\GammaI &&[f_i^0]\\
& \alpha^0-\beta_i^0\leq c_i,  & \forall i\in I\setminus I_\GammaI &&[g_i^0]\\
& \alpha^0-\beta_i^0-\lambda^0\leq c_i,  & \forall i\in \bI &&[h_i^0]\\
 & \omega \leq p\cdot\alpha^1-\one^T\pmb{\beta}^1-(b+\Gammaone - \Gamma)\lambda^1 & &&[e^1]\\
& \alpha^1-\beta_i^1-\lambda^1\leq c_i,  & \forall i\in I_\GammaI &&[f_i^1]\\
& \alpha^1-\beta_i^1\leq c_i,  & \forall i\in I\setminus I_\GammaI &&[g_i^1]\\
 & \alpha^1-\beta_i^1\leq c_i,  & \forall i\in \bI &&[h_i^1]\\
 &  \alpha^0,\pmb{\beta}^0,\lambda^0,\alpha^1,\pmb{\beta}^1,\lambda^1 \geq 0
\end{align*}
We introduce the dual variables $e^0,f_i^0,g_i^0,h_i^0,e^1,f_i^1,g_i^1,h_i^1$ corresponding to each constraint of the above reformulation, as denoted by the corresponding brackets. Specifically, $f_i^0$ and $f_i^1$ are defined for all $i\in I_\GammaI$; $g_i^0$ and $g_i^1$ are defined for all $i\in I\setminus I_\GammaI$; and $h_i^0$ and $h_i^1$ are defined for all $i\in \bI$. Dualizing the above linear programs, we obtain that $\Phi(I)$ is the smallest $\eta$ such that for all $\Gammaone\in\setGammaI$ it holds that:
\begin{align}
\eta \geq \min\; & \sum_{i \in I_\GammaI} c_i (f_i^0+f_i^1) + \sum_{i\in I\setminus I_\GammaI} c_i (g_i^0+g_i^1) + \sum_{i\in \bI}c_i (h_i^0+h_i^1)\label{eq:dual:obj}\\
\mbox{s.t.}\; & e^0 + e^1 = 1 \label{eq:dual:first}\\
    &-p\cdot e^0 + \sum_{i \in I_\GammaI} f_i^0 + \sum_{i\in I\setminus I_\GammaI}g_i^0 + \sum_{i\in \bI} h_i^0 \geq 0 \\
    & e^0-f_i^0 \geq 0, & \forall i \in I_\GammaI\\
    & e^0-g_i^0 \geq 0, & \forall i\in I\setminus I_\GammaI \\
    & e^0-h_i^0 \geq 0, & \forall i\in \bI \\
    & b\cdot e^0 - \sum_{i \in I_\GammaI} f_i^0 - \sum_{i\in \bI} h_i^0 \geq 0 \\
    & -p\cdot e^1 + \sum_{i \in I_\GammaI} f_i^1 + \sum_{i\in I\setminus I_\GammaI}g_i^1 + \sum_{i\in \bI} h_i^1 \geq 0 \\
    & e^1-f_i^1 \geq 0, & \forall i \in I_\GammaI\\
    & e^1-g_i^1 \geq 0, & \forall i\in I\setminus I_\GammaI \\
    & e^1-h_i^1 \geq 0, & \forall i\in \bI \\
    & (b+\Gammaone-\Gamma)e^1 - \sum_{i \in I_\GammaI} f_i^1  \geq 0 \\
    &e^0,e^1 \geq 0\\
    &f_i^0, f_i^1 \geq 0,&\forall i \in I_\GammaI\\
    &g_i^0, g_i^1 \geq 0, &\forall i\in I\setminus I_\GammaI\\
    &h_i^0, h_i^1 \geq 0, & \forall i\in \bI\label{eq:dual:last}
\end{align}

Adding the index $\GammaI$ to the variables of~\eqref{eq:dual:obj}--\eqref{eq:dual:last} corresponding to $\GammaI$, we obtain that
\begin{align}
     \Phi(I) =
    \min \;&\eta\\
    \mbox{s.t.}\; & \eta \geq \min\Biggl\{\sum_{i \in I_\GammaI} c_i (f_{i,\Gammaone}^0+f_{i,\Gammaone}^1) + \sum_{i\in I\setminus I_\GammaI} c_i (g_{i,\Gammaone}^0+g_{i,\Gammaone}^1) + \sum_{i\in \bI}c_i (h_{i,\Gammaone}^0+h_{i,\Gammaone}^1)\;\Biggr|\label{eq:explicitmin}\\
    & (e_\GammaI^0,\bbf_\GammaI^0,\bg_\GammaI^0,\bh_\GammaI^0,e_\GammaI^1,\bbf_\GammaI^1,\bg_\GammaI^1,\bh_\GammaI^1)\mbox{ satisfies }\eqref{eq:dual:first}-\eqref{eq:dual:last}\Biggr\},\quad
    \forall \Gammaone\in\setGammaI.\nonumber
\end{align}
We finally obtain the linear programming formulation for $\Phi(I)$ by observing that the minimization can be removed in the right-hand-side of~\eqref{eq:explicitmin}:
\begin{align}
     \Phi(I) =
    \min \;&\eta\\
    \mbox{s.t.}\;& \eta \geq \sum_{i \in I_\GammaI} c_i (f_{i,\Gammaone}^0+f_{i,\Gammaone}^1) + \sum_{i\in I\setminus I_\GammaI} c_i (g_{i,\Gammaone}^0+g_{i,\Gammaone}^1) + \sum_{i\in \bI}c_i (h_{i,\Gammaone}^0+h_{i,\Gammaone}^1),\hspace{-5.5cm}\nonumber \\
    & & \forall \Gammaone\in\setGammaI \label{constr-phi-start}\\
    & e_{\Gammaone}^0 + e_{\Gammaone}^1 = 1,& \forall \Gammaone\in\setGammaI \\
    & -p\cdot e_{\Gammaone}^0 + \sum_{i \in I_\GammaI} f_{i,\Gammaone}^0 + \sum_{i\in I\setminus I_\GammaI}g_{i,\Gammaone}^0 + \sum_{i\in \bI} h_{i,\Gammaone}^0 \geq 0, \hspace{-5.5cm} & \forall \Gammaone\in\setGammaI\\
    & e_{\Gammaone}^0-f_{i,\Gammaone}^0 \geq 0, & \forall i \in I_\GammaI, \Gammaone\in\setGammaI\\
    & e_{\Gammaone}^0-g_{i,\Gammaone}^0 \geq 0, & \forall i\in I\setminus I_\GammaI, \Gammaone\in\setGammaI\\
    & e_{\Gammaone}^0-h_{i,\Gammaone}^0 \geq 0, & \forall i\in \bI, \Gammaone\in\setGammaI \\
    & b\cdot e_{\Gammaone}^0 - \sum_{i \in I_\GammaI} f_{i,\Gammaone}^0 - \sum_{i\in \bI} h_{i,\Gammaone}^0 \geq 0,  & \forall \Gammaone\in\setGammaI\\
    & -p\cdot e_{\Gammaone}^1 + \sum_{i \in I_\GammaI} f_{i,\Gammaone}^1 + \sum_{i\in I\setminus I_\GammaI}g_{i,\Gammaone}^1 + \sum_{i\in \bI} h_{i,\Gammaone}^1 \geq 0,  \hspace{-5.5cm} & \forall \Gammaone\in\setGammaI\\
    & e_{\Gammaone}^1-f_{i,\Gammaone}^1 \geq 0, & \forall i \in I_\GammaI, \Gammaone\in\setGammaI\\
    & e_{\Gammaone}^1-g_{i,\Gammaone}^1 \geq 0, & \forall i\in I\setminus I_\GammaI, \Gammaone\in\setGammaI\\
    & e_{\Gammaone}^1-h_{i,\Gammaone}^1 \geq 0, & \forall i\in \bI, \Gammaone\in\setGammaI \\
    & (b+\Gammaone-\Gamma)e_{\Gammaone}^1 - \sum_{i \in I_\GammaI} f_{i,\Gammaone}^1  \geq 0, & \forall \Gammaone\in\setGammaI \label{constr-phi-fin}\\
    & e_{\Gammaone}^0, e_{\Gammaone}^1 \geq 0, &\forall \Gammaone\in\setGammaI\\
    & f_{i,\Gammaone}^0, f_{i,\Gammaone}^1 \geq 0, &\forall i \in I_\GammaI,\forall \Gammaone\in\setGammaI \label{constr_f}\\
    & g_{i,\Gammaone}^0, g_{i,\Gammaone}^1 \geq 0, &\forall i\in I\setminus I_\GammaI, \forall \Gammaone\in\setGammaI \label{constr_g}\\
    &h_{i,\Gammaone}^0, h_{i,\Gammaone}^1 \geq 0, &\forall i\in \bI, \forall \Gammaone\in\setGammaI \label{constr_h}
\end{align}

\subsubsection{Mixed-integer programming formulation for problem~\eqref{eq:CU}}
\label{sec:mip}

To present the mixed-integer linear programming reformulation of this subsection, we first need to define the set following set of binary vectors, which is bijection with $\I$:
\begin{equation*}
\W = \set{\pmb{w}\in\{0,1\}^n}{\exists I \in \I \text{  such that } w_i=1 \Longleftrightarrow i \in I}.
\end{equation*}
We furthermore assume that we know a compact formulation for $\W$ in the following sense.
\begin{assumption}
There is a matrix $A$ and a vector $\pmb{b}$ whose dimensions are polynomial in $n$ and such that $\W=\set{\pmb{w}\in\{0,1\}^n}{A\pmb{w} \leq \pmb{b}}$.
\end{assumption}
We introduce next the binary optimization variables $w_i$, $\forall i \in [n]$, and the variables $u_{i,\Gammaone}$, $\forall i \in [n], \Gammaone \in [\Gamma]_{0}$. These variables have the following meanings: $w_i=1$ if $i \in I$, and $w_i=0$ otherwise. Further,  given a $\Gammaone$, $u_{i,\Gammaone}=1$ if $i\in I_\GammaI$, and $u_{i,\Gammaone}=0$ otherwise. Observe that $\Gammaone$ cannot exceed the number of the observed coefficients.
Therefore, we introduce the additional binary variables $z_{\GammaI}$, $\forall \Gammaone \in [\Gamma]_0$, where $z_{\Gammaone}=1$ if $\Gammaone \leq \sum_{i\in [n]} w_i$ and $z_{\Gammaone}=0$ otherwise. Therefore, we obtain the following MILP reformation for the problem $\min_{I \in \I} \Phi(I)$, containing polynomially many variables and constraints and big-$M$ coefficients:
\begin{align}
\min \;&\eta \label{mip:start}\\
\mbox{s.t.}\;&\eta \geq \sum_{i \in [n]}c_i (f_{i,\Gammaone}^0+f_{i,\Gammaone}^1) + \sum_{i\in [n]} c_i (g_{i,\Gammaone}^0+g_{i,\Gammaone}^1) + \sum_{i\in [n]}c_i (h_{i,\Gammaone}^0+h_{i,\Gammaone}^1), \hspace{-8cm} \nonumber\\
& & \forall \Gammaone \in [\Gamma]_{0} \label{constr1-start}\\
&e_{\Gammaone}^0 + e_{\Gammaone}^1 = 1,& \forall \Gammaone\in[\Gamma]_0  \\
    &-p\cdot e_{\Gammaone}^0 + \sum_{i \in [n]} f_{i,\Gammaone}^0 + \sum_{i\in [n]}g_{i,\Gammaone}^0 + \sum_{i\in [n]} h_{i,\Gammaone}^0 \geq 0,\hspace{-8cm} & \forall \Gammaone \in [\Gamma]_{0} \\
    & e_{\Gammaone}^0-f_{i,\Gammaone}^0 \geq u_{i,\Gammaone} -1, & \forall i \in [n], \Gammaone \in [\Gamma]_{0}\\
    &e_{\Gammaone}^0-g_{i,\Gammaone}^0 \geq w_i - u_{i,\Gammaone} -1, & \forall i\in [n], \Gammaone \in [\Gamma]_{0} \\
    & e_{\Gammaone}^0-h_{i,\Gammaone}^0 \geq -w_i, & \forall i\in [n], \Gammaone \in [\Gamma]_{0}  \\
    &b\cdot e_{\Gammaone}^0 - \sum_{i \in [n]} f_{i,\Gammaone}^0 - \sum_{i\in [n]} h_{i,\Gammaone}^0 \geq 0,& \forall \Gammaone \in [\Gamma]_{0} \\
    &-p\cdot e_{\Gammaone}^1 + \sum_{i \in [n]} f_{i,\Gammaone}^1 + \sum_{i\in [n]}g_{i,\Gammaone}^1 + \sum_{i\in [n]} h_{i,\Gammaone}^1 \geq 0, \hspace{-8cm} & \forall \Gammaone \in [\Gamma]_{0} \\
    & e_{\Gammaone}^1-f_{i,\Gammaone}^1 \geq u_{i,\Gammaone} -1, & \forall i \in [n], \Gammaone \in [\Gamma]_{0}\\
    & e_{\Gammaone}^1-g_{i,\Gammaone}^1 \geq w_i - u_{i,\Gammaone} -1, & \forall i\in [n], \Gammaone \in [\Gamma]_{0} \\
    & e_{\Gammaone}^1-h_{i,\Gammaone}^1 \geq - w_i, & \forall i\in [n], \Gammaone \in [\Gamma]_{0} \\
    & (b+\Gammaone-\Gamma)e_{\Gammaone}^1 - \sum_{i \in [n]} f_{i,\Gammaone}^1  \geq 0,& \forall \Gammaone \in [\Gamma]_{0} \label{constr1-fin}\\
    & u_{i,\Gammaone} \leq w_i, & \forall i \in [n] , \Gammaone \in [\Gamma]_{0} \label{constr2-start}\\
    & u_{i,\Gammaone} \geq u_{j,\Gammaone} - 1 + w_i, & \forall i \in [n], j \in [n]: i < j , \Gammaone \in [\Gamma]_{0}\label{constr2-fin}\\
    & f_{i,\Gammaone}^0 \leq u_{i,\Gammaone}, & \forall i \in [n], \Gammaone \in [\Gamma]_{0}\label{constr3-start}\\
    & f_{i,\Gammaone}^1 \leq u_{i,\Gammaone}, & \forall i \in [n], \Gammaone \in [\Gamma]_{0}\\
    & g_{i,\Gammaone}^0 \leq w_i-u_{i,\Gammaone}, & \forall i \in [n], \Gammaone \in [\Gamma]_{0}\\
    & g_{i,\Gammaone}^1 \leq w_i-u_{i,\Gammaone}, & \forall i \in [n], \Gammaone \in [\Gamma]_{0}\\
    & h_{i,\Gammaone}^0 \leq 1-w_i, & \forall i \in [n], \Gammaone \in [\Gamma]_{0} \\
    & h_{i,\Gammaone}^0 \leq 1-w_i, & \forall i \in [n], \Gammaone \in [\Gamma]_{0} \label{constr3-fin}\\
    & M \cdot z_{\Gammaone} \geq \sum_{i\in [n]} w_i - \Gammaone,& \forall \Gammaone \in [\Gamma]_{0} \label{constr4-start}\\
    & M \cdot (z_{\Gammaone}-1) \leq \sum_{i\in [n]} w_i - \Gammaone,& \forall \Gammaone \in [\Gamma]_{0}\\
    &\sum_{i \in [n]} u_{i,\Gammaone} \geq \Gammaone - M\cdot(1-z_{\Gammaone}),& \forall \Gammaone \in [\Gamma]_{0}\\
    &\sum_{i \in [n]} u_{i,\Gammaone} \geq \sum_{i\in [n]} w_i - M\cdot z_{\Gammaone},& \forall \Gammaone \in [\Gamma]_{0}\\
    &\sum_{i \in [n]} u_{i,\Gammaone} \leq \Gammaone,& \forall \Gammaone \in [\Gamma]_{0}\\
    &\sum_{i \in [n]} u_{i,\Gammaone} \leq \sum_{i\in [n]} w_i,& \forall \Gammaone \in [\Gamma]_{0} \label{constr4-fin}\\
    & \pmb{z} \in \{0,1\}^{\Gamma+1}\\
    & \pmb{w} \in \W \\
    & \pmb{u} \in \{0,1\}^{n \times (\Gamma+1 )} \\
    & e_{\Gammaone}^0,f_{i,\Gammaone}^0,g_{i,\Gammaone}^0,h_{i,\Gammaone}^0,e_{\Gammaone}^1,f_{i,\Gammaone}^1,g_{i,\Gammaone}^1,h_{i,\Gammaone}^1 \geq 0\hspace{-8cm} & \forall i \in [n], \Gammaone \in [\Gamma]_{0} \label{mip:end}
\end{align}
In the above MILP formulation, Constraints~(\ref{constr1-start}--\ref{constr1-fin}) are reformulated from Constraints~(\ref{constr-phi-start}--\ref{constr-phi-fin}) using the variables $\pmb{w}$ and $\pmb{u}$. Constraints~(\ref{constr2-start}--\ref{constr2-fin}) impose that if $u_j=1$ for $j>i$ and $w_i=1$, then $u_i$ must also be set to one, in accordance with the definition of $I_\GammaI$.
Notice from (\ref{constr_f}--\ref{constr_h}) that the variables $\pmb{f}$, $\pmb{g}$, $\pmb{h}$ are related to the items in the sets $I_\GammaI$, $I\setminus I_\GammaI$, $\bI$, respectively. Therefore, these variables should be zero for any items not included in their respective sets. This requirement is enforced by Constraints~(\ref{constr3-start}--\ref{constr3-fin}). Finally, Constraints~(\ref{constr4-start}--\eqref{constr4-fin}) ensure that for each $\Gammaone \in [\Gamma]_0$, $\sum_{i \in [n]} u_{i,\Gammaone} = \min\{\Gammaone, \sum_{i\in [n]}w_i\}$.
Obviously, big-$M$ can be set to $n$.

\subsection{Selection query sets}
\label{subsec:ISEL}

We prove in this subsection that in the special case $\I^{SEL}=\set{I\subseteq[n]}{|I| \leq q}$ (so $Q=q$), the optimal solution always picks specific $q$ items from the ranked list where the items are sorted by $c_i$. Hence, thanks to Theorem~\ref{thm:On}, the resulting optimization problem can be solved in $O(n)$. Notice that here also, Assumption~\ref{assump:cnondec} is only made to ease the proofs, as the algorithm itself does not require to sort the items. 

\begin{theorem}
\label{thm:optimal}
If $n\geq p + q$, $p > b$ and $\Gamma > b$, there is an optimal solution to problem~\eqref{eq:CU} with $\I=\I^{SEL}$ such that $I^*=\{b+1,b+2,\ldots,b+q\}$, whose value is $\sum_{i=1}^{b} c_i + \sum_{i=\Gamma+1}^{p-b+\Gamma} c_i$.
\end{theorem}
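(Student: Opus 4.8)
The plan is to establish the two-sided bound $\Phi(I)\ge V\ge\Phi(I^*)$ for every $I\in\I^{SEL}$, where $V=\sum_{i=1}^{b}c_i+\sum_{i=\Gamma+1}^{p-b+\Gamma}c_i$ is the claimed value; since $I^*\in\I^{SEL}$, these force $\Phi(I^*)=V$ and show $I^*$ is optimal. The whole argument hinges on one reinterpretation of $V$: it equals the minimum of $c(S):=\sum_{i\in S}c_i$ over selections $S$ with $|S|=p$ and $|S\cap[\Gamma]|\le b$. Indeed, minimising cost under the cap ``at most $b$ items from $[\Gamma]$'' greedily takes the $b$ cheapest items $\{1,\dots,b\}$ of $[\Gamma]$ and then the $p-b$ cheapest remaining items $\{\Gamma+1,\dots,\Gamma+p-b\}$, which exist because $n\ge\Gamma+p-b$ (from $n\ge p+q$ and the feasibility bound $\Gamma\le b+q$ of Proposition~\ref{prop:conditions}). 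I will also use $\Phi(I)=\max_{\Gammaone\in\setGammaI}\phi(I,\Gammaone)$ and that the inner adversary attacks the cheapest queried items (Proposition~\ref{pro:smallestcost}).

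For the lower bound $\Phi(I)\ge V$, valid for an \emph{arbitrary} $I\in\I^{SEL}$, I will exhibit a single adversary move that defeats every cheap selection: let $\pmb{\gamma}$ fail exactly the queried items lying in $[\Gamma]$, i.e. $\gamma_i=1$ iff $i\in I\cap[\Gamma]$. This is admissible since $|I\cap[\Gamma]|\le\Gamma$, and the residual budget on $\bI$ is $\Gammazero=\Gamma-|I\cap[\Gamma]|$, which equals the number of \emph{unqueried} items inside $[\Gamma]$, namely $|[\Gamma]\setminus I|$. The key step is a counting identity: for any $S$, each item of $S\cap[\Gamma]$ is either a failed queried item or an attackable unqueried item, and since $|S\cap\bI\cap[\Gamma]|\le|[\Gamma]\setminus I|=\Gammazero$ the residual budget suffices to fail every unqueried item of $S\cap[\Gamma]$ too; hence the worst-case number of failures of $S$ is at least $|S\cap[\Gamma]|$. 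Therefore any feasible $S$ has $|S\cap[\Gamma]|\le b$, so $c(S)\ge V$ by the reinterpretation above, and minimising over feasible $S$ yields $\phi(I,|I\cap[\Gamma]|)\ge V$, whence $\Phi(I)\ge V$.

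For the upper bound $\Phi(I^*)\le V$ I will bound $\phi(I^*,\Gammaone)$ for each $\Gammaone\in\setGammaI$ by a feasible selection of cost at most $V$; here the adversary fails $\{b+1,\dots,b+\Gammaone\}$, leaving $\Gammazero=\Gamma-\Gammaone$. Every candidate starts from the $b$ cheapest (unqueried, hence attackable) items $\{1,\dots,b\}$ and is completed by the cheapest \emph{protected} items. When $\Gammazero>b$ (i.e. $\Gammaone\le\Gamma-b-1$) I complete with $p-b$ safe queried items $\{b+\Gammaone+1,\dots\}$, which exist since $p\le2b+q-\Gamma+1$; only $\{1,\dots,b\}$ is attackable, so failures equal $\min(\Gammazero,b)=b$ and $S$ is feasible. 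When $\Gammazero\le b$ (i.e. $\Gammaone\ge\Gamma-b$) I additionally pack the $b-\Gammazero$ cheapest failed queried items before the safe ones, so failures total $(b-\Gammazero)+\Gammazero=b$. A short comparison of index ranges shows the cost is nondecreasing in $\Gammaone$ on $\Gammaone\le\Gamma-b$ and nonincreasing on $\Gammaone\ge\Gamma-b$, so it is maximised at $\Gammaone=\Gamma-b$, where the selection is $\{1,\dots,b\}\cup\{\Gamma+1,\dots,\Gamma+p-b\}$ of cost exactly $V$. Combined with the lower bound applied to $I^*$ this gives $\Phi(I^*)=\phi(I^*,\Gamma-b)=V$.

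The main obstacle is this last step: when $p$ attains its maximum $2b+q-\Gamma+1$, the protected items no longer all fit inside the queried block $\{b+1,\dots,b+q\}$ and one item must be taken from $\{b+q+1,\dots\}$; verifying that this spillover preserves feasibility (because $\min(\Gammazero,b+1)=\Gammazero\le b$) and still gives cost exactly $V$ is the delicate point. By contrast the lower bound is clean once one hits on ``fail precisely the queried items of $[\Gamma]$'', its entire content being the budget identity $\Gammazero=|[\Gamma]\setminus I|$. Finally, $I^*$ and $V$ depend only on order statistics of $\pmb{c}$ (the $b$ cheapest items and the items ranked $b+1,\dots,b+q$ and $\Gamma+1,\dots,\Gamma+p-b$), all computable in $O(n)$ by Theorem~\ref{thm:On} without sorting, which gives the claimed complexity.
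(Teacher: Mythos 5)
Your proposal is correct, and it splits into two halves that relate differently to the paper's proof. The upper-bound half ($\Phi(I^*)\le V$) is essentially the paper's argument: invoke Proposition~\ref{pro:smallestcost} so the adversary fails the first $\Gammaone$ queried items, do the case analysis $\Gammazero\ge b$ versus $\Gammazero<b$, and observe that the resulting piecewise expression is maximized at $\Gammaone=\Gamma-b$, where it equals $\sum_{i=1}^{b}c_i+\sum_{i=\Gamma+1}^{p-b+\Gamma}c_i$. (In fact you are slightly \emph{more} careful than the paper here: in the boundary case $p=2b+q-\Gamma+1$ one selected item must spill outside the queried block $\{b+1,\dots,b+q\}$, which is feasible only because $\Gammazero=b$ exactly at the maximizer; the paper's derivation asserts the remaining items come from the safe queried set and its index formula silently absorbs this spillover.) The lower-bound half is where you genuinely diverge. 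The paper rewrites the robust constraint as a big-$M$ indicator penalty and applies the min--max inequality, so the adversary commits to $\pmb{\delta}'=\indicator_{[\Gamma]}$ \emph{before} $\pmb{x}$ is chosen, and then reads off the bound. You instead stay inside the actual order of play: you pick the first-stage reply $\gamma_i=1$ iff $i\in I\cap[\Gamma]$, and use the budget identity $\Gamma-|I\cap[\Gamma]|=|[\Gamma]\setminus I|$ to show that the \emph{adaptive} second-stage adversary can fail every element of $S\cap[\Gamma]$ for whatever $S$ is selected, forcing $|S\cap[\Gamma]|\le b$ and hence cost at least $V$. Both routes rest on the same combinatorial fact (a feasible selection holds at most $b$ of the $\Gamma$ cheapest items), but yours avoids the penalty reformulation and the min--max swap, at the price of the explicit counting argument; the paper's swap is shorter but proves the weaker-looking (yet sufficient) statement that even a non-adaptive adversary achieves the bound.
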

\begin{proof}
We assume that the two conditions of Proposition~\ref{prop:conditions} are verified. Our aim is to prove that the optimal solution is reached at $I^*=\{b+1,b+2,\ldots,b+q\}$. First, in order to determine the value of $\Phi(I^*)=\max_{\Gammaonestar\in[\min(q,\Gamma )]_0}\phi(I,\Gammaonestar)$, we consider two cases based on the values of $\Gammaonestar=\sum_{i\in I^*}\GammaI$ and $\Gammazerostar=\Gamma-\Gammaonestar$: $\Gammazerostar$ $\geq b$ and $\Gammazerostar < b$.

If $\Gammazerostar \geq b$, we can pick at most $b$ items from the set $\bI^*\cup \set{i\in I^*}{\GammaI = 1}$, the remaining items must be picked from the set $\set{i\in I^*}{\GammaI = 0}$. By Proposition~\ref{pro:smallestcost}, the adversary always targets the first $\Gammaonestar$ items in the set $I^*$. Therefore, the set $\set{i\in I^*}{\GammaI = 1}$ includes the indices $\{b+1,b+2,\dots, b+\Gammaonestar\}$. Consequently, the set $\bI^*\cup \set{i\in I^*}{\GammaI = 1}$ covers the indices $\{1,2,\dots,b+\Gammaonestar,b+q+1,b+q+2,\dots,n\}$, while the set 
$\set{i\in I^*}{\GammaI = 0}$ consists of the indices $\{b+\Gammaonestar+1, b+\Gammaonestar+2,\dots, b+q\}$.
Thus, we have
\begin{equation*}
\phi(I^{*},\Gammaonestar)=\sum_{i=1}^{b} c_i + \sum_{i=b+\Gammaonestar+1}^{b+\Gammaonestar+(p-b)}c_i = \sum_{i=1}^{b} c_i + \sum_{i=b+\Gammaonestar+1}^{p+\Gammaonestar}c_i.
\end{equation*}

If $\Gammazerostar < b$, let us construct an optimal selection by picking the cheapest items possible. We start by picking the first $b$ items, which are in $\bI^*$. As $\Gammazerostar$ of these items shall be affected by the uncertainty, we can next pick at most $b-\Gammazerostar$ items from the set $\set{i\in I^*}{\gamma=1}$, corresponding to the indices $\{b+1,b+2,\dots,b+b-\Gammazerostar\}$ by Proposition~\ref{pro:smallestcost}. This implies that the first $2b - \Gammazerostar$ ($=b+(b-\Gammazerostar)$) items can be picked. The remaining $p-(2b - \Gammazerostar)$ items must be picked from the set $\bI^*\cup \set{i\in I^*}{\GammaI = 0}$, corresponding the indices $\{b+\Gammaonestar+1, b+\Gammaonestar+2, \dots, n\}$. Depending on the value of $p$, we discuss two subcases: $p \leq 2b -\Gammazerostar$ and $p > 2b - \Gammazerostar$. If $p \leq 2b - \Gammazerostar$, then $\phi(I^{*},\Gammaonestar) = \sum_{i=1}^{p} c_i$. Otherwise,
\begin{equation*}
\phi(I^{*},\Gammaonestar) = \sum_{i=1}^{2b - \Gammazerostar} c_i + \sum_{i=b+\Gammaonestar+1}^{b+\Gammaonestar+(p-(2b - \Gammazerostar))} c_i = \sum_{i=1}^{2b - \Gammazerostar} c_i + \sum_{i=b+\Gammaonestar+1}^{p-b+\Gamma} c_i.
\end{equation*}

Overall, we have
\begin{equation*}
\phi(I^{*},\Gammaonestar)=\begin{cases}
    \sum_{i=1}^{p} c_i & \text{if } \Gammazerostar \leq 2b-p\\
    \sum_{i=1}^{2b - \Gammazerostar} c_i + \sum_{i=b+\Gammaonestar+1}^{p-b+\Gamma} c_i & \text{if } 2b - p<\Gammazerostar < b\\
    \sum_{i=1}^{b} c_i + \sum_{i=b+\Gammaonestar+1}^{p+\Gammaonestar}c_i & \text{if } \Gammazerostar \geq b,
\end{cases}
\end{equation*}
where we observe that $2b-p< b$ because of the assumption $p>b$. 

To simplify the discussion of the function $\phi(I^*,\Gammaonestar)$ below, we rewrite it as follows by substituting $\Gammazerostar$ with $\Gamma-\Gammaonestar$.
\begin{equation*}
\phi(I^{*},\Gammaonestar)=\begin{cases}
    \sum_{i=1}^{b} c_i + \sum_{i=b+\Gammaonestar+1}^{p+\Gammaonestar}c_i & \text{if } \Gammaonestar \leq \Gamma-b\\
    \sum_{i=1}^{2b - \Gamma+\Gammaonestar} c_i + \sum_{i=b+\Gammaonestar+1}^{p-b+\Gamma} c_i & \text{if } \Gamma-b<\Gammaonestar < \Gamma-2b+p\\
    \sum_{i=1}^{p} c_i & \text{if } \Gammaonestar \geq \Gamma-2b+p
\end{cases}
\end{equation*}
We next show that 
\begin{equation}
\label{eq:Gammaonemax}
\Gamma - b\in\argmax_{\Gammaonestar\in[\min(q,\Gamma )]_0}\phi(I^*,\Gammaonestar).
\end{equation}
In the case where $\Gammaonestar \leq \Gamma-b$, the function $\phi(I^{*},\Gammaonestar)$ is non-decreasing in $\Gammaonestar$, so it is maximized when $\Gammaonestar=\Gamma-b$, in which case $\phi(I^{*},\Gamma-b)= \sum_{i=1}^{b} c_i + \sum_{i=\Gamma+1}^{p-b+\Gamma} c_i$. In the case where $\Gamma-b<\Gammaonestar < \Gamma-2b+p$, the function $\phi(I^{*},\Gammaonestar)$ is non-increasing in $\Gammaonestar$, so it is maximized when $\Gammaonestar=\Gamma-b+1$. Furthermore,
\begin{equation*}
\phi(I^{*},\Gamma-b+1)=\sum_{i=1}^{b+1} c_i + \sum_{i=\Gamma+2}^{p-b+\Gamma} c_i \leq \sum_{i=1}^{b} c_i + \sum_{i=\Gamma+1}^{p-b+\Gamma} c_i =\phi(I^{*},\Gamma-b).
\end{equation*}
Last, in the case where $\Gammaonestar \geq \Gamma-2b+p$, the function is independent of $\Gammaonestar$ and $\sum_{i=1}^{p} c_i\leq \phi(I^{*},\Gamma-b)$.
To conclude the proof of~\eqref{eq:Gammaonemax}, it remains to show that there always exists a $\Gammaonestar\in[\min(q,\Gamma )]_0$ such that $\Gammaonestar=\Gamma-b$: since $b+1\leq \Gamma \leq b+q$, we know that $1\leq \Gamma-b \leq q$, which proves this fact.

Now, we consider any solution $I\in\I^{SEL}$. Consider the indicator function $\indicator_{\pmb{\delta}^T\pmb{x}> b}$. Observe that because $\pmb{x}$ is binary, we can substitute the constraint on $\pmb{x}$ with a penalization term in the objective function, so for $M$ large enough, we have
\begin{align*}
    \Phi(I) 
    = 
    \max_{\gamma \in \UG} \min_{\pmb{x}\in \X}\max_{\pmb{\delta}\in \UG(I,\pmb{\gamma})}\pmb{c}^T\pmb{x}+M\indicator_{\pmb{\delta}^T\pmb{x}> b}
    &\geq 
    \max_{\pmb{\gamma} \in \UG} \max_{\pmb{\delta}\in \UG(I,\pmb{\gamma})} \min_{\pmb{x}\in \X}\pmb{c}^T\pmb{x}+M\indicator_{\pmb{\delta}^T\pmb{x}> b}\\
    &=\max_{\pmb{\delta} \in \UG} \min_{\pmb{x}\in \X}\pmb{c}^T\pmb{x}+M\indicator_{\pmb{\delta}^T\pmb{x}> b}   \\
    &\geq \min_{\pmb{x}\in \X}\pmb{c}^T\pmb{x}+M\indicator_{(\pmb{\delta}')^T\pmb{x}> b},    \label{eq:minmax_ineq}
\end{align*}
where $\delta'_i = 1$, for each $i \in [\Gamma]$ and $\delta'_i = 0$, for each $i \in \{\Gamma+1,\Gamma+2,\dots,n \}$. We see that the optimal solution to the minimization problem picks at most $b$ items from the first $\Gamma$ items, and the remaining items must be picked from the set $ \{\Gamma+1,\Gamma+2,\dots,n \}$. Therefore,  $\Phi(I) \geq \sum_{i=1}^{b} c_i + \sum_{i=\Gamma+1}^{p-b+\Gamma} c_i=\Phi(I^*)$.
\end{proof}

If $\I = \I^{SEL}$, thanks to Theorem~\ref{thm:optimal}, the problem can be solved in $O(n)$ by relying on Theorem~\ref{thm:On}. Notice that while the optimal value of this case does not depend on the value of $q$, the value of $q$ determines the feasibility of the problem. We state this result in the following corollary.

\begin{corollary}
If $\I=\I^{SEL}$, problem~\eqref{eq:CU} can be solved in $O(n)$.
\end{corollary}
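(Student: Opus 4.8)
The plan is to reduce the corollary to an explicit closed-form for the optimal value and then argue that this form is evaluable in linear time, so that no enumeration over $\I^{SEL}$ is ever required. First I would dispatch the degenerate regimes: when $p \le b$ or $\Gamma \le b$, the discussion preceding Proposition~\ref{prop:conditions} already shows that an optimal solution selects the $p$ cheapest items, with value $\sum_{i=1}^{p} c_i$. When instead $p > b$ and $\Gamma > b$, I would (working under the standing hypothesis $n \ge p + q$ of this subsection, i.e. $Q=q$) first test feasibility through the two inequalities of Proposition~\ref{prop:conditions}, namely $\Gamma \le b + q$ and $p \le 2b + q - \Gamma + 1$, and report infeasibility whenever either fails. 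In the remaining feasible case, Theorem~\ref{thm:optimal} hands us directly an optimal query set $I^* = \{b+1,\ldots,b+q\}$ (in the ranking induced by $\pmb{c}$) together with the optimal value $\sum_{i=1}^{b} c_i + \sum_{i=\Gamma+1}^{p-b+\Gamma} c_i$, so the entire first-stage optimization collapses to evaluating a fixed formula.

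The crux is then purely computational: each quantity above is the sum of a contiguous block of the cost vector \emph{taken in non-decreasing order}, and it must be produced in $O(n)$ even though sorting would cost $O(n\log n)$. I would exploit Theorem~\ref{thm:On} to compute, for any rank $k$, the sum $\sigma(k)$ of the $k$ smallest costs in linear time: locate the $k$-th smallest value $v$ via the linear-time selection routine, partition the items into those with cost $< v$, $= v$, and $> v$, and set $\sigma(k) = \big(\sum_{c_i < v} c_i\big) + v\cdot\big(k - |\{i : c_i < v\}|\big)$, which is exact even under ties. Then $\sum_{i=1}^{p} c_i = \sigma(p)$, $\sum_{i=1}^{b} c_i = \sigma(b)$, and the block sum rewrites as $\sum_{i=\Gamma+1}^{p-b+\Gamma} c_i = \sigma(p-b+\Gamma) - \sigma(\Gamma)$; a constant number of calls to $\sigma$, each $O(n)$, yields the objective in $O(n)$. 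The same selection calls also identify the members of $I^*$ (the items whose cost rank lies between $b+1$ and $b+q$) and an optimal selection $\pmb{x}$, so the full solution, and not merely its value, is recovered in linear time.

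The point requiring the most care is exactly this linear-time evaluation of ranked partial sums, and in particular the correct accounting for ties: Assumption~\ref{assump:cnondec} is used only to state Theorem~\ref{thm:optimal} with pre-sorted indices and is \emph{not} available to an $O(n)$ algorithm, so it must not be silently invoked, which is why the computation is routed through Theorem~\ref{thm:On} rather than through an explicit sort. A secondary subtlety is the bookkeeping that selects the right formula across all parameter regimes (degenerate cases, infeasibility, and the main case) together with the standing condition $n \ge p+q$; this amounts only to $O(1)$ arithmetic layered on top of the constantly many selection calls. Assembling these observations establishes that problem~\eqref{eq:CU} with $\I = \I^{SEL}$ is solved in $O(n)$, as claimed.
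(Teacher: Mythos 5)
Your proposal is correct and takes essentially the same route as the paper, whose entire argument for this corollary is to combine the closed-form optimal query set and value from Theorem~\ref{thm:optimal} with the linear-time selection of Theorem~\ref{thm:On}. The paper leaves the evaluation of the ranked partial sums implicit, so your explicit tie-aware computation of $\sigma(k)$ and the case bookkeeping (degenerate regimes, feasibility via Proposition~\ref{prop:conditions}) are a more careful spelling-out of the same argument rather than a different approach.
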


\section{Numerical experiments}
\label{sec:num}

We provide next a short numerical illustration of the solving capability of the MILP reformulation derived in Section~\ref{sec:mip} for the case $\I=\I^{KP}$.
We generated 200 instances with $n\in\{20,40,60,80,100\}$, $p\in\{n/5,n/10\}$ and $\Gamma\in\{n/5,n/10\}$. For each parameter choice of $(n,p,\Gamma)$, we randomly generated 10 instances. In each instance, $b$ is a random integer between $1$ and $n$, $\pmb{a}$ and $\pmb{c}$ are two vectors, each containing $n$ random values between $0$ and $50$, and $C$ is a random integer between $1$ and the sum of the values in vector $\pmb{a}$. One of the instances led to an infeasible model. Consequently, the results in Table~\ref{tab:results}, which displays the average solution times across various $n,p,\Gamma$, are obtained from 199 instances. Among these, two instances did not have an optimal solution found within the set time limit of $3600$ seconds, whose values of $n$, $p$ and $\Gamma$ are $\{100,10,20\}$ and $\{100,20,20\}$, respectively. For these two instances, their solving times were recorded as $3600$ seconds, and as a result, the average solution times for these two sizes are marked with an asterisk in the table.
All experiments have been realized in Python 3.6, using Gurobi 11.01, on a processor Intel Xeon E312xx (Sandy Bridge) with a clock speed of 2.29GHz.
\begin{table}[h]
    \centering
    \begin{tabular}{r|r|r|r|r|r|r|r|r}
    $n$ &$p=n/10\quad\Gamma=n/10$ &$p=n/10\quad\Gamma=n/5$ &$p=n/5\quad\Gamma=n/10$ &$p=n/5\quad\Gamma=n/5$\\
    \cline{1-5}
    20	&0.0	&0.1	&0.1	&0.2\\
    40	&0.5	&0.5	&0.5	&3.0\\
    60	&1.0	&1.6	&1.2	&98.3\\
    80	&6.6	&37.0	&13.7	&163.0\\
    100 &6.0	&$414.8^*$	&527.7	&$390.3^*$
    \end{tabular}
    \caption{Average solution times (in seconds) of MILP reformulation}
    \label{tab:results}
\end{table}

Our computational results indicate the viability of formulation~(\ref{mip:start}--\ref{mip:end}), despite its use of big-$M$ constraints and many auxiliary variables. Our compact problem formulation solves all instances with up to 80 items to optimality. The choice of $p$ and $\Gamma$ affects the average computation times, with the smallest choices $p=n/10$ and $\Gamma=n/10$ resulting in the easiest of the considered problems. In particular, the instances we considered with $n=100$, $p=10$ and $\Gamma=10$ can still be solved in 6 seconds on average.

\section{Conclusion}
\label{sec:conc}

Decision-dependent information discovery (DDID) is a powerful and flexible modeling approach that extends classic robust optimization problems by another decision-making stage, in which the decision maker is allowed to query uncertain parameters before choosing a solution. This additional stage means that classic robust optimization approaches cannot be directly applied, which has sparked interest in new solution algorithms for this setting. However, little is known about the complexity of DDID for specific underlying optimization problems.

This paper has started the study of this complexity with particularly simple special cases based on the selection feasibility set. We differentiated between uncertainty in the objective and uncertainty in the constraints. In the first case, we showed that the problem is NP-hard if the set of possible queries is not further restricted, but were able to derive strongly polynomial solution methods in the case of selection query sets or more general knapsack query sets. In the second case, we can give a complete analytical description when the optimization problem is feasible. While the problem is NP-hard even for knapsack query sets, we showed that it remains solvable in strongly polynomial time for selection query sets. Finally, we derived a compact mixed-integer linear programming formulation and showed that using this formulation, problems with knapsack query sets remain solvable within one hour with up to 80 items. Future work will see to extend these results to continuous budget uncertainty set and to problems with objective uncertainty and more than one item to select.

\end{document}